\documentclass[11pt,a4paper]{article}

\usepackage[headinclude]{typearea}
\usepackage[english]{babel}           
\usepackage[utf8]{inputenc}
\usepackage[T1]{fontenc}
\usepackage{scrtime}
\usepackage{srcltx}
\usepackage{amsmath,amsthm,amsfonts,amssymb}
\usepackage {color}                                  
\usepackage {pifont}                                 
\usepackage {multicol,multirow}                               
\usepackage[numbers]{natbib}                                                     
\usepackage[normalem]{ulem}                                                     
\usepackage [scaled=.90]{helvet}                     
\usepackage {courier}                                
\usepackage {graphicx}                               
\usepackage {array}                                  
\usepackage {longtable}                              
\usepackage{fancyvrb}
\usepackage{enumerate} 
\usepackage{ifpdf}
\usepackage{caption}
\usepackage{mathrsfs}

\usepackage{setspace}

\usepackage{marvosym}

\setlength{\voffset}{-2cm}
\setlength{\hoffset}{-1cm}
\setlength{\textheight}{24cm}
\setlength{\textwidth}{16cm}

\theoremstyle{plain}
\newtheorem{theorem}{Theorem}[section]

\newtheorem{lemma}[theorem]{Lemma}

\theoremstyle{definition}
\newtheorem{assumption}{Assumption}[section]
\newtheorem{definition}[theorem]{Definition}
\newtheorem{example}[theorem]{Example}

\newcommand{\exclude}[1]{}

\newcommand{\proj}{p}

\newcommand{\err}{{\operatorname{err}}}
\newcommand{\id}{{\operatorname{id}_{\R^d}}}
\newcommand{\tang}{{\operatorname{tang}}}
\newcommand{\sdist}{D}
\newcommand{\idz}{{\operatorname{id}_{\R^2}}}

\newcommand{\1}{{\mathbf{1}}}
\newcommand{\E}{{\mathbb{E}}}

\newcommand{\N}{{\mathbb{N}}}
\renewcommand{\P}{{\mathbb{P}}}

\newcommand{\R}{{\mathbb{R}}}

\newcommand{\strongrate}{1/4-\epsilon}

\definecolor{darkgreen}{rgb}{0,0.5,0}
\definecolor{lightgreen}{rgb}{0.5,0.9,0.5}
\definecolor{magenta}{rgb}{0.75,0,0.25}

\definecolor{violet}{rgb}{0.25,0,0.75}

\newcommand{\hypsurf}{\Theta}
\newcommand{\appx}{X^\delta}
\newcommand{\appz}{Z^\delta}
\newcommand{\es}{{\underline{s}}}
\newcommand{\set}[1]{{\Omega_{\delta,\varepsilon,#1}}}
\newcommand{\setc}[1]{{\Omega^c_{\delta,\varepsilon,#1}}}
\newcommand{\tr}{\operatorname{tr}}
\newcommand{\setS}{S}
\newcommand{\drift}{A}
\newcommand{\diff}{B}
\newcommand{\smallconsti}{\left(\frac{4}{3 c_0}\right)}

\newcommand{\divthr}{f}
\newcommand{\divsig}{\beta}
\newcommand{\divmu}{\vartheta}
\newcommand{\dsig}{a}
\newcommand{\othereps}{\epsilon}

\renewcommand{\P}{{\mathbb P}}

\newcommand{\cS}{{\cal S}}

\newcommand{\be}{\begin{equation}}
\newcommand{\ee}{\end{equation}}
\newcommand{\bea}{\begin{eqnarray}}
\newcommand{\eea}{\end{eqnarray}}
\newcommand{\beast}{\begin{eqnarray*}}
\newcommand{\eeast}{\end{eqnarray*}}
\newcommand{\bproof}{\begin{proof}}
\newcommand{\eproof}{\end{proof}}

\hyphenation{Lip-schitz}

\bibliographystyle{plainnat}

\title{Convergence of the Euler-Maruyama method for multidimensional SDEs with discontinuous drift and degenerate diffusion coefficient}

\author{Gunther Leobacher and Michaela Sz\"olgyenyi}
\date{Corrected version, January 2018}

\begin{document}

\maketitle


\begin{abstract}
We prove strong convergence of order $\strongrate$ for arbitrarily small $\othereps>0$ of the Euler-Maruyama method for multidimensional
stochastic differential equations (SDEs) with discontinuous drift and degenerate diffusion coefficient. 
The proof is based on estimating the difference between the Euler-Maruyama scheme and another numerical method, which is constructed by applying the Euler-Maruyama scheme to a transformation of the SDE we aim to solve.\\

\noindent Keywords: stochastic differential equations, discontinuous drift, degenerate diffusion, Euler-Maruyama method, strong convergence rate\\
Mathematics Subject Classification (2010): 60H10, 65C30, 65C20 (Primary), 65L20 (Secondary)
\end{abstract}


\section{Introduction}

We consider time-homogeneous stochastic differential equations (SDEs) of the form
\begin{align}\label{eq:sde}
dX_t=\mu(X_t) dt + \sigma(X_t) dW_t\,, \quad X_0=x\,,
\end{align}
where $x\in \R^d$ is the initial value, $\mu:\R^d \longrightarrow \R^d$ is the drift and $\sigma: \R^d \longrightarrow \R^{d \times d}$ is the diffusion coefficient.

The Euler-Maruyama approximation with step-size $\delta>0$ of the solution to \eqref{eq:sde} is given by
\begin{align}\label{eq:euler}
\appx_t=x+\int_0^t \mu(\appx_\es) ds + \int_0^t \sigma(\appx_\es) dW_s\,,
\end{align}
with $\es=j\delta$ for $s\in[j \delta , (j+1)\delta )$, $j=0,\dots,(T-\delta)/\delta$.
In particular, for $t\in\{j \delta: j=0,\dots,(T-\delta)/\delta\}$, we have
\begin{align*}
\appx_{t+\delta}=\appx_{t}+\mu(\appx_{t}) \delta + \sigma(\appx_{t}) 
(W_{t+\delta}-W_{t})\,.
\end{align*}

For $\mu,\sigma$ Lipschitz, \citet{ito1951} proved existence and uniqueness of the solution of \eqref{eq:sde}.
In this case the Euler-Maruyama method \eqref{eq:euler} converges with strong order $1/2$ to the true solution,
see \cite[Theorem 10.2.2]{kloeden1992}. Higher order algorithms exist, but require stronger conditions on the coefficients.\\

In applications, frequently SDEs with less regular coefficients appear.
For example in stochastic control theory, whenever the optimal control is of bang-bang type, meaning that the strategy is of the form $\1_\cS(X)$ for a measurable set $\cS\subseteq \R^d$, the drift of the controlled dynamical system is discontinuous.
Furthermore, there are models which involve only noisy observations of a signal that 
has to be filtered. After applying filtering theory the diffusion coefficient
typically is degenerate in the sense that $\|\sigma(x)^\top v\|=0$, for some $x,v\in\R^d$.
This motivates the study of SDEs with these kind of irregularities in the coefficients.\\

If $\mu$ is bounded and measurable, and $\sigma$ is bounded, Lipschitz, and 
uniformly non-degenerate, i.e.~if there
exists a constant $c_0>0$ such that for all $x\in \R^d$ and all $v\in \R^d$ it holds that $\|\sigma(x)^\top v\|\ge c_0 \|v\|$, \citet{zvonkin1974} and
\citet{veretennikov1981,veretennikov1982} 
prove existence and uniqueness of a solution.
\citet{veretennikov1984} extends these results by allowing a part of the diffusion to be degenerate.

In \cite{sz14} existence and uniqueness of a solution for the case where the drift is
discontinuous at a hyperplane, or a special hypersurface and where the diffusion coefficient is degenerate is proven,
and in \cite{sz2016a} it is shown how these results extend to the non-homogeneous case.\\

Currently, research on numerical methods for SDEs with irregular coefficients is highly active.
\citet{hutzenthaler2012} introduce the tamed Euler-Maruyama scheme and prove strong order $1/2$ convergence for SDEs with continuously differentiable and polynomially growing drift that satisfy a one-sided Lipschitz condition.
\citet{sabanis2013} proves strong convergence of the tamed Euler-Maruyama scheme from a different perspective and also considers the case of locally Lipschitz diffusion coefficient.
\citet{gyongy1998} proves almost sure convergence of the Euler-Maruyama scheme for the case where the drift satisfies a monotonicity condition.

\citet{halidias2008} show that the Euler-Maruyama scheme converges strongly for SDEs with a discontinuous monotone drift coefficient.
\citet{kohatsu2013} show weak convergence with rates smaller than 1 of a method where they first regularize the discontinuous drift and then apply the Euler-Maruyama scheme.
\citet{etore2013,etore2014} introduce an exact simulation algorithm for one-dimensional SDEs with a drift coefficient which is discontinuous in one point, but differentiable everywhere else.
For one-dimensional SDEs with piecewise Lipschitz drift and possibly degenerate diffusion coefficient, in \cite{sz15} an existence and uniqueness result is proven, and a numerical method, which is based on applying the Euler-Maruyama scheme to a transformation of \eqref{eq:sde}, is presented. This method converges with strong order $1/2$.
In \cite{sz2016b} a (non-trivial) extension of the method is introduced, which converges with strong order $1/2$ also in the multidimensional case. The paper also contains an existence and uniqueness result for the multidimensional setting under more general conditions than, e.g., the ones stated in \cite{sz14}.

The method introduced in \cite{sz2016b} is the first numerical method that is proven to converge with positive strong rate for multidimensional SDEs with discontinuous drift and degenerate diffusion coefficient. It requires application of a transformation and its numerical inverse in each step, which makes the method rather slow in practice.
Furthermore, the method requires specific inputs about the geometry of the discontinuity of the drift to calculate this transformation. This is a drawback, if, e.g., the method shall be applied for solving control problems, since the control is usually not explicitly known.
So a method is preferred that can deal with the discontinuities in the drift automatically.

First results in this direction are contained in a series of papers by Ngo and Taguchi. 
In \cite{ngo2016c} they show convergence of order up to $1/4$ of the Euler-Maruyama method for multidimensional SDEs with discontinuous bounded drift that satisfies a one-sided Lipschitz condition and with H\"older continuous, bounded, and uniformly non-degenerate diffusion coefficient.
In \cite{ngo2016a} they extend this result to cases where the drift is not necessarily one-sided Lipschitz for one-dimensional SDEs,
and in \cite{ngo2016b} they extend the result for one-dimensional SDEs by allowing for discontinuities also in the diffusion coefficient.
For many applications, their results fail to be applicable, since they only hold for one-dimensional SDEs and their method of proof relies on uniform non-degeneracy of the diffusion coefficient.

Contrasting the above, there are several delimiting results which
state that even equations with infinitely often differentiable coefficients
cannot always be solved approximately in finite time, even if the
Euler-Maruyama method converges,
cf.~\citet{hairer2015,jentzen2016,muellergronbach2016,yaroslavtseva2016}.
However, there is still a big gap between the assumptions on the coefficients
under which convergence with strong convergence rate has been proven and the
properties of the coefficients of the equation presented in
\cite{hairer2015}.\\

In this paper we prove strong convergence of order $\strongrate$ for arbitrarily small $\othereps>0$ of the Euler-Maruyama method for multidimensional SDEs with discontinuous drift satisfying a piecewise Lipschitz condition and with a degenerate diffusion coefficient.
Note that we do not impose a one-sided Lipschitz condition on the drift.
So even for SDEs with non-degenerate diffusion coefficient, which do not have a one-sided Lipschitz drift, this result is novel.

Our convergence proof is based on estimating the difference between the Euler-Maruyama scheme and the scheme presented in \cite{sz2016b}.
Close to the set of discontinuities of the drift, we have no tight estimate of this difference, so we
need to study the occupation time of an It\^o process with degenerate diffusion coefficient there.
Away from the set of discontinuities, it is essential to estimate the probability that during one step the distance between the interpolation of the Euler-Maruyama method and the previous Euler-Maruyama step becomes greater than some threshold.

This paper's result is the first one that gives strong convergence and also a strong convergence rate of a fully explicit scheme for multidimensional SDEs with discontinuous drift and degenerate diffusion coefficient, and the first one for multidimensional SDEs with discontinuous drift that does not satisfy a one-sided Lipschitz condition.

\section{Preliminaries}
\label{sec:preliminaries}

In this section we first state the assumptions on the coefficients of SDE \eqref{eq:sde},
under which the result of this paper is proven,
then we study the occupation time of an It\^o process close to a hypersurface, and
finally we recall the transformation from \cite{sz2016b}, which is also essential for our proof. 

\subsection{Definitions and assumptions}

We want to prove strong convergence of the Euler-Maruyama method for 
SDEs with discontinuous drift coefficient. Instead of the usual requirement of 
Lipschitz continuity we only assume that the drift is a piecewise Lipschitz
function on the $\R^d$. 

\begin{definition}[\text{\cite[Definitions 3.1 and 3.2]{sz2016b}}]
Let $A\subseteq \R^d$.
\begin{enumerate}
\item For a continuous curve $\gamma:[0,1]\longrightarrow \R^d$, let  $\ell(\gamma)$ denote its length,
\[
\ell(\gamma)=\sup_{n,0\le t_1<\ldots<t_n\le 1}\sum_{k=1}^n \|\gamma(t_k)-\gamma(t_{k-1})\|\,.
\] 
The {\em intrinsic metric} $\rho$ on $A$ 
is given by 
\[
\rho(x,y):=\inf\{\ell(\gamma):\gamma:[0,1]\longrightarrow A \text{ is a continuous curve satisfying } \gamma(0)=x,\, \gamma(1)=y\}\,,
\]
where $\rho(x,y):=\infty$, if there is no continuous curve from $x$
to $y$.  
\item Let $f:A\longrightarrow \R^m$ be a function.
We say that $f$ is {\em intrinsic Lipschitz}, if it is Lipschitz w.r.t. the
intrinsic metric on $A$, i.e. if there exists a constant $L$ such that
\[
\forall x,y\in A: \|f(x)-f(y)\|\le L \rho(x,y)\,.
\]
\end{enumerate}
\end{definition}

The prototypical examples for intrinsic Lipschitz function are given,
like in the one-dimensional case, by differentiable functions with bounded 
derivative.

\begin{lemma}[\text{\cite[Lemma 3.8]{sz2016b}}]\label{lem:diff-lip}
Let $A\subseteq \R^d$ be open and let 
$f:A\longrightarrow \R^m$ be a differentiable function with $\|f'\|<\infty$.
Then $f$ is intrinsic Lipschitz with Lipschitz constant $\|f'\|$.
\end{lemma}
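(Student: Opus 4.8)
The plan is to show that the intrinsic Lipschitz bound for $f$ with constant $\|f'\| := \sup_{x \in A}\|f'(x)\|$ follows by integrating the derivative along curves in $A$ and taking the infimum over all such curves. The key observation is that the intrinsic metric is built precisely from the lengths of admissible curves, so a bound of the form $\|f(x)-f(y)\| \le \|f'\| \cdot \ell(\gamma)$ for \emph{every} curve $\gamma$ in $A$ joining $x$ to $y$ immediately yields $\|f(x)-f(y)\| \le \|f'\| \cdot \rho(x,y)$ after passing to the infimum. If there is no curve joining $x$ to $y$, then $\rho(x,y)=\infty$ and there is nothing to prove, so we may fix such a $\gamma:[0,1]\longrightarrow A$.

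First I would reduce to the case of a \emph{rectifiable} curve: if $\ell(\gamma)=\infty$ the desired inequality is trivial, so assume $\ell(\gamma)<\infty$. Then I would reparametrize $\gamma$ by arc length, obtaining a Lipschitz curve $\tilde\gamma:[0,\ell(\gamma)]\longrightarrow A$ with $\|\tilde\gamma'\|\le 1$ almost everywhere. The composition $g := f\circ\tilde\gamma$ is then Lipschitz on $[0,\ell(\gamma)]$ (since $f$ is locally Lipschitz on the open set $A$ and $\tilde\gamma$ has compact image in $A$), hence absolutely continuous, so the fundamental theorem of calculus applies:
\[
f(x)-f(y) = g(\ell(\gamma)) - g(0) = \int_0^{\ell(\gamma)} g'(s)\,ds = \int_0^{\ell(\gamma)} f'(\tilde\gamma(s))\,\tilde\gamma'(s)\,ds.
\]
Taking norms and using $\|f'(\tilde\gamma(s))\,\tilde\gamma'(s)\| \le \|f'\|\,\|\tilde\gamma'(s)\| \le \|f'\|$ for a.e.\ $s$ gives $\|f(x)-f(y)\|\le \|f'\|\,\ell(\gamma)$. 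Taking the infimum over all admissible $\gamma$ then yields $\|f(x)-f(y)\|\le \|f'\|\,\rho(x,y)$, which is the claim.

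**The main obstacle** is the justification that $g=f\circ\tilde\gamma$ is absolutely continuous and that the chain rule holds in the integrated form above, because $f$ is only assumed differentiable (not $C^1$), and $\tilde\gamma$ is only Lipschitz (so $\tilde\gamma'$ exists only a.e.). One clean way around this is to avoid arc-length reparametrization of a possibly badly behaved $\gamma$ and instead argue directly: cover the compact set $\gamma([0,1])\subseteq A$ by finitely many balls on which $f$ is Lipschitz with constant $\|f'\|$ (using the mean value inequality for differentiable functions on convex sets, which holds without $C^1$), use a Lebesgue-number argument to partition $[0,1]$ into subintervals $[t_{k-1},t_k]$ whose $\gamma$-images each lie in one such ball, and then telescope:
\[
\|f(x)-f(y)\| \le \sum_{k=1}^n \|f(\gamma(t_k))-f(\gamma(t_{k-1}))\| \le \|f'\|\sum_{k=1}^n \|\gamma(t_k)-\gamma(t_{k-1})\| \le \|f'\|\,\ell(\gamma).
\]
This replaces the analytic subtlety with elementary covering arguments and is the version I would write up. One should be mildly careful that the balls used must be contained in $A$ so that the mean value inequality applies along segments inside $A$, but this is automatic since $A$ is open and $\gamma([0,1])$ is a compact subset of it.
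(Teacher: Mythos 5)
Your argument is correct, and your second (covering-based) write-up is essentially the standard proof of this lemma, which the present paper only quotes from \cite[Lemma 3.8]{sz2016b} without reproving: cover the compact image $\gamma([0,1])$ by balls contained in the open set $A$, apply the mean value inequality (valid for merely differentiable $f$ on convex sets) on each ball, pass to a partition finer than a Lebesgue number, telescope to get $\|f(x)-f(y)\|\le \|f'\|\,\ell(\gamma)$, and take the infimum over curves. You were right to prefer this over the arc-length/fundamental-theorem version, which would require extra care about absolute continuity of $f\circ\tilde\gamma$ and the a.e.\ chain rule.
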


\begin{definition}[\text{\cite[Definition 3.4]{sz2016b}}]\label{def:pw-lip}
A function $f:\R^d\longrightarrow\R^m$ is {\em piecewise Lipschitz}, if
there exists a hypersurface $\hypsurf$ with finitely many connected 
components and with the property, that
the restriction $f|_{\R^d\backslash \hypsurf}$ is intrinsic Lipschitz.
We call $\hypsurf$ an {\em exceptional set} for $f$,
and we call
\[
\sup_{x,y\in \R^d\backslash\hypsurf}\frac{\|f(x)-f(y)\|}{\rho(x,y)}
\]
the \emph{piecewise Lipschitz constant} of $f$.
\end{definition}

In this paper $\hypsurf$ will be a fixed $C^3$-hypersurface, and we will 
only consider piecewise Lipschitz functions with exceptional set $\hypsurf$.
In the following, $L_f$ denotes the piecewise Lipschitz constant of a function $f$, if $f$ is piecewise Lipschitz,
and it denotes the Lipschitz constant, if $f$ is Lipschitz.

We define the distance $d(x,\hypsurf)$ between a point $x$ and the hypersurface 
$\hypsurf$ by $d(x,\hypsurf):=\inf\{\|x-y\|:y \in \hypsurf\}$,
and for every $\varepsilon>0$ we define 
$\hypsurf^\varepsilon:=\{x\in \R^d: d(x,\hypsurf)<\varepsilon\}$.

Recall that, since $\hypsurf\in C^3$, for every $\xi\in \hypsurf$ there exists
an open environment $U\subseteq \hypsurf$ of $\xi$ and a continuously differentiable
function $n:U\longrightarrow \R^d$ such that for every $\zeta\in U$ the vector
$n(\zeta)$ has length 1 and is orthogonal to the tangent space of $\hypsurf$ in
$\zeta$.
On a given connected open subset of $\hypsurf$ the local unit normal vector $n$ is unique up to a factor $\pm 1$.

We recall a definition from differential geometry.\\

\begin{definition}\label{def:positivereach}
Let $\hypsurf\in \R^d$ be any set. 
\begin{enumerate}
\item \label{def:ucpp} 
An environment $\hypsurf^\varepsilon$ is said to have the 
{\em unique closest point property}, if for every $x\in \R^d$
with $d(x,\hypsurf)<\varepsilon$ there is a unique $\proj\in \hypsurf$ with
$d(x,\hypsurf)=\|x-\proj\|$.
Therefore, we can define a mapping 
$\proj:\hypsurf^{\varepsilon}\longrightarrow \hypsurf$ assigning to each $x$
the point $\proj(x)$ in $\hypsurf$ closest to $x$.
\item 
$\hypsurf$ is said to be of {\em positive reach}, if there exists  
$\varepsilon>0$ such that  $\hypsurf^\varepsilon$ has the  
unique closest point property.
The {\em reach} of $\hypsurf$ is the supremum 
over all such $\varepsilon$ if such an $\varepsilon$ exists, and 
0 otherwise. 
\end{enumerate}

\end{definition}

Now, we give assumptions which are sufficient for the results in
\cite{sz2016b} to hold and which we need to prove the main result here.

\begin{assumption}\label{ass:existence}
We assume the following for the coefficients of \eqref{eq:sde}:
\begin{enumerate}
\item\label{ass:existence-musigmabounded} $\mu$ and $\sigma$ are bounded;
\item \label{ass:existence-sigmalip} the diffusion coefficient $\sigma$ is Lipschitz;
\item \label{ass:existence-pwlip} the drift coefficient $\mu$ is a piecewise Lipschitz function $\R^d\longrightarrow \R^d$.
Its exceptional set $\hypsurf$ is a $C^4$-hypersurface of positive reach and
every unit normal vector $n$ of $\hypsurf$ has bounded second and third derivative;
\item \label{ass:existence-nonparallelity} {\em non-parallelity condition:}
there exists a constant $c_0>0$ such that $\|\sigma(\xi)^\top n(\xi)\|\ge c_0$ for
all $\xi\in \hypsurf$;
\item \label{ass:existence-alpha} the function $\alpha: \hypsurf \longrightarrow \R^d$ defined by
\begin{align}\label{eq:alphad}
\alpha(\xi):=\lim_{h\to 0+}\frac{\mu(\xi-h n(\xi))-\mu(\xi+hn(\xi))}{2 \|\sigma(\xi)^\top n(\xi)\|^2}
\end{align}
is $C^3$ and all derivatives up to order three are bounded.
\end{enumerate}
\end{assumption}

\begin{theorem}[\text{\cite[Theorem 3.21]{sz2016b}}]
Let Assumption \ref{ass:existence} hold. Then SDE \eqref{eq:sde} has a unique strong solution.
\end{theorem}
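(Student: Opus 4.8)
We outline the transformation argument of \cite{sz2016b}. The plan is to reduce the assertion to the classical existence and uniqueness theorem of \citet{ito1951} (cf.\ \cite[Theorem~10.2.2]{kloeden1992}) for SDEs with Lipschitz coefficients, by transforming away the discontinuity of $\mu$. Since $\hypsurf$ is a $C^3$-hypersurface of positive reach, first fix $r>0$ below the reach so that on the tube $\hypsurf^r$ the signed distance $\sdist$ (with $|\sdist|=d(\cdot,\hypsurf)$ and $\grad\sdist=\pm\,n\circ\proj$) and the nearest-point projection $\proj$ are well defined and satisfy $\sdist\in C^3$, $\proj\in C^2$ on $\hypsurf^r$; fix also a bounded $C^3$-extension $\bar\alpha\colon\R^d\to\R^d$ of the map $\alpha$ from \eqref{eq:alphad} and a cut-off $\phi\in C^\infty(\R)$ with $\phi\equiv 1$ near $0$ and $\operatorname{supp}\phi\subseteq(-r,r)$. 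I would then set
\[
G(x):=x+\phi\bigl(\sdist(x)\bigr)\,\sdist(x)\,|\sdist(x)|\,\bar\alpha(x)\ \text{ for }x\in\hypsurf^r,\qquad G(x):=x\ \text{ otherwise.}
\]
Because $\sdist\,|\sdist|$ and $\bar\alpha|_{\hypsurf}$ both change sign with the orientation of $n$, $G$ does not depend on the local choice of $n$, and $G\equiv\id$ on $\hypsurf$. Using the elementary fact that $t\mapsto t|t|$ is $C^1$ with derivative $2|t|$ and with second derivative jumping by $4$ at $0$, together with Lemma \ref{lem:diff-lip} and the bounds on $\bar\alpha,\proj,\sdist$, one checks that $G\in C^1(\R^d)$ with $G'=\id$ on $\hypsurf$, that $G$ is $C^2$ on $\R^d\setminus\hypsurf$ with $G''$ bounded and Lipschitz on each side of $\hypsurf$, and that $G'$ is globally Lipschitz. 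Choosing $r$ small enough---possible because $\|\bar\alpha\|_\infty$ and the relevant derivatives of $\proj,\sdist$ on $\hypsurf^r$ are bounded---makes $\|G-\id\|_\infty$ and $\|G'-\id\|$ arbitrarily small, so that $G$ is a global bi-Lipschitz homeomorphism of $\R^d$ whose inverse $G^{-1}$ enjoys the same regularity ($C^1$, globally Lipschitz derivative, $C^2$ off $\hypsurf$), and $G(\hypsurf)=\hypsurf$.

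Next I would read off the transformed coefficients from a formal application of It\^o's formula to $G(X)$:
\[
\tilde\mu:=\Bigl(G'\mu+\tfrac12\,\tr\bigl(\sigma\sigma^\top G''\bigr)\Bigr)\circ G^{-1},\qquad \tilde\sigma:=\bigl(G'\sigma\bigr)\circ G^{-1},
\]
where $\tr(\sigma\sigma^\top G'')$ denotes the componentwise second-order It\^o term, defined on $\R^d\setminus\hypsurf$. Away from $\hypsurf$, $\tilde\mu$ and $\tilde\sigma$ are bounded and Lipschitz by the chain rule, using boundedness of $\mu,\sigma,G',G'',G^{-1}$, Lipschitz continuity of $\sigma,G',G''$, piecewise Lipschitz continuity of $\mu$, and uniform invertibility of $G'$. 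The only possible discontinuity is in $G'\mu+\tfrac12\tr(\sigma\sigma^\top G'')$ across $\hypsurf$; but there $G'=\id$, while the jump of $G''$ across $\hypsurf$ equals $\pm2\,\bar\alpha\,(n\circ\proj)(n\circ\proj)^\top$ with the sign of $\sdist$, so the limits of $G'\mu+\tfrac12\tr(\sigma\sigma^\top G'')$ at $\xi\in\hypsurf$ from the $\pm n(\xi)$ side equal $\lim_{h\to0+}\mu\bigl(\xi\pm h\,n(\xi)\bigr)\pm\alpha(\xi)\,\|\sigma(\xi)^\top n(\xi)\|^2$, and these two values coincide exactly because $\alpha$ is defined by \eqref{eq:alphad}. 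Hence $\tilde\mu$ is continuous across $\hypsurf$, and being intrinsic Lipschitz on either side of the positive-reach hypersurface $\hypsurf$ it is globally Lipschitz (this gluing uses the positive reach, cf.\ \cite{sz2016b}); $\tilde\mu$ and $\tilde\sigma$ are also bounded.

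By the classical theorem, $dZ_t=\tilde\mu(Z_t)\,dt+\tilde\sigma(Z_t)\,dW_t$ with $Z_0=G(x)$ has a unique, non-exploding strong solution $Z$. Since $G'=\id$ on $\hypsurf$ we have $\tilde\sigma=\sigma$ there, so by continuity the non-parallelity condition of Assumption \ref{ass:existence}\,\eqref{ass:existence-nonparallelity} holds for $\tilde\sigma$ on a neighbourhood of $\hypsurf=G(\hypsurf)$, and the occupation-time result below gives $\E\int_0^T\1_{\hypsurf}(Z_s)\,ds=0$. This is exactly what makes It\^o's formula applicable to the $C^1$, piecewise-$C^2$ map $G^{-1}$ along $Z$, its second-order term being undefined only on the $Z$-null set $\hypsurf$; inverting the algebraic relations above, one obtains that $X:=G^{-1}(Z)$ solves \eqref{eq:sde}, and since $X$ is a measurable functional of $Z$, hence of $W$, it is a non-exploding strong solution. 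For uniqueness, any strong solution $X$ of \eqref{eq:sde} is an It\^o process whose diffusion coefficient near $\hypsurf$ is $\sigma$, so the same occupation-time result yields $\E\int_0^T\1_{\hypsurf}(X_s)\,ds=0$; It\^o's formula for $G$ then shows that $G(X)$ solves the transformed equation with initial value $G(x)$, and pathwise uniqueness for that equation forces $G(X)=Z$, i.e.\ $X=G^{-1}(Z)$.

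The two steps I expect to be most delicate are, first, the construction of $G$: verifying that it is a global $C^1$-diffeomorphism with the stated regularity and, above all, that the jump of $G''$ across $\hypsurf$ is exactly the one cancelling the drift jump via \eqref{eq:alphad}---this is where the $C^3$-regularity of $\hypsurf$ and of $\alpha$ and the positive-reach hypothesis are used; and second, the occupation-time estimate $\E\int_0^T\1_{\hypsurf}(Y_s)\,ds=0$ for a bounded It\^o process $Y$ satisfying the non-parallelity condition near $\hypsurf$, which I would obtain by localising to $\hypsurf^r$ with stopping times, noting that there $\sdist(Y_t)$ is a one-dimensional It\^o process with $d\langle\sdist(Y)\rangle_t\ge c_0^2\,dt$, and concluding from the occupation-times formula that $\int_0^T\1_{\{0\}}\bigl(\sdist(Y_s)\bigr)\,ds=0$ since one-dimensional local time has no atom.
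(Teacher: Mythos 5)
Your sketch is correct and follows essentially the same route as the cited source: it reconstructs the very transformation $G(x)=x+\tilde\phi(x)\alpha(p(x))$ that the paper recalls in Section~\ref{subsec:transformation}, with the same transformed coefficients $\tilde\mu,\tilde\sigma$, the same cancellation of the drift jump via the definition \eqref{eq:alphad} of $\alpha$, the same bi-Lipschitz invertibility of $G$ for a small enough cut-off radius, and the same occupation-time justification of It\^o's formula for the piecewise-$C^2$ maps $G$ and $G^{-1}$. No substantive gaps; the points you flag as delicate (regularity of $G$, the gluing of the one-sided Lipschitz estimates across $\hypsurf$, and the zero occupation time of $\hypsurf$) are exactly the ones handled by the auxiliary results of the cited reference.
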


\paragraph{Remark on Assumption \ref{ass:existence}:}
\hfill
\begin{enumerate}
 \item For existence and uniqueness of a solution to \eqref{eq:sde}, in \cite[Theorem 3.21]{sz2016b} instead of Assumption \ref{ass:existence}.\ref{ass:existence-musigmabounded} 
only boundedness in an $\varepsilon$-environment of $\hypsurf$ is needed.
However, for the proof of our convergence result we require global boundedness.
Note that other results in the literature on numerical methods for SDEs with discontinuous drift also rely on boundedness of the coefficients, cf.~\cite{ngo2016a,ngo2016b,ngo2016c}.
 \item Assumption \ref{ass:existence}.\ref{ass:existence-sigmalip} is a technical condition; the focus in this paper is on other types of irregularities in the coefficients. There are results in the literature, where the authors deal with a non-globally Lipschitz diffusion coefficient, see, e.g., \cite{gyongy2011}, but in contributions where only H\"older continuity is required for $\sigma$, usually uniform non-degeneracy is assumed.
 \item Assumption \ref{ass:existence}.\ref{ass:existence-pwlip} is a geometrical condition which we require in order to locally flatten $\hypsurf$, i.e.~to map $\hypsurf$ to a hyperplane in a regular way.
 This is crucial in many places in \cite{sz2016b} and here, in particular for the proof of Theorem \ref{th:occtime}  below.
 
  In addition to that, Assumption \ref{ass:existence}.\ref{ass:existence-pwlip} implies that there exists a constant $c_1$ such that $\|n'(\xi)\|\le c_1$ for every $\xi\in\hypsurf$ and every orthonormal vector $n$ on $\hypsurf$, see \cite[Lemma 3.10]{sz2016b}.
 \item Assumption \ref{ass:existence}.\ref{ass:existence-nonparallelity} means that the diffusion coefficient must have a component orthogonal to $\hypsurf$ in all $\xi \in \hypsurf$.
 This condition is significantly weaker than uniform non-degeneracy, and it is essential: in \cite{sz14} we give a counterexample for the case where the non-parallelity condition does not hold. Then, even existence of a solution is not guaranteed.
 \item Assumption \ref{ass:existence}.\ref{ass:existence-alpha} is a technical
condition, which is required for our transformation method to work. Boundedness
of $\alpha$ and $\alpha'$ is needed for proving the local invertibility of our
transform. Existence and boundedness of $\alpha''$ and $\alpha'''$ is required
for the multidimensional version of It\^o's formula to hold for the transform,
see \cite{sz2016b}.  

Moreover, it has been shown in \cite[Proposition 3.13]{sz2016b} that $\alpha$ is
a well-defined function on $\hypsurf$, i.e.~it does not depend on the choice of the normal vector $n$ and, in particular, on its sign.
\end{enumerate}

\begin{example}\label{ex:compact}
Suppose $\hypsurf$ is the finite and disjoint union of orientable compact $C^4$-manifolds.
Then $\hypsurf$ is of positive reach by the lemma in \cite{foote1984}, and each 
connected component of $\hypsurf$ separates the $\R^n$ into two open connected
components by the Jordan-Brouwer separation theorem, see \cite{lima1988}.

Thus $\R^d\backslash\hypsurf$ is the union of finitely many disjoint open
connected subsets of $\R^d$; we can write $\R^d\backslash\hypsurf=A_1\cup\dots\cup A_n$.

Suppose there exist bounded and Lipschitz $C^3$-functions 
$\mu_1,\dots,\mu_n:\R^d\longrightarrow\R^d$
such that $\mu=\sum_{k=1}^n \1_{A_k}\mu_k$, and suppose that 
$\sigma:\R^d\longrightarrow\R^{d\times d}$ is bounded, Lipschitz, and $C^3$ with $\sigma(\xi)^\top n(\xi)\ne 0$
for every $\xi\in \hypsurf$.

Then it is readily checked that $\mu$ and $\sigma$ satisfy Assumption \ref{ass:existence}.
\end{example}

In Section \ref{sec:examples} we present a number of concrete 
examples which satisfy Assumption \ref{ass:existence} and we perform numerical
tests on the associated SDEs.

\subsection{Occupation time close to a hypersurface}
\label{subsec:occhypsurf}

In this section we study the occupation time of an It\^o process
close to a $C^3$-hypersurface.
In the proof of our main theorem, the Euler-Maruyama approximation 
$\appx$ in equation 
\eqref{eq:euler} will play the role of that It\^o
process.

\begin{theorem}\label{th:occtime} 
Let $\hypsurf$ be a $C^3$-hypersurface of positive reach  and
let ${\varepsilon_0}>0$ be such that the closure of $\hypsurf^{\varepsilon_0}$ has the unique closest point property.
Let further
$X=(X_t)_{t\ge 0}$ be an $\R^d$-valued It\^o process
\[
X_t=X_0+\int_0^t \drift_s ds+\int_0^t \diff_s dW_s\,,
\]
with progressively measurable processes $\drift=(\drift_t)_{t\ge 0}$, $\diff=(\diff_t)_{t\ge 0}$, where $\drift$ is $\R^d$-valued and 
$\diff$ is $\R^{d\times d}$-valued. 
Let the coefficients $\drift,\diff$ be such that
\begin{enumerate} 
\item \label{it:bounded-coeff}
there exists a constant $c_{AB}$ such that 
for almost all $\omega\in \Omega$ it holds that
\[
\forall t\in[0,T]: X_t(\omega)\in \hypsurf^{\varepsilon_0} \Longrightarrow 
\max(\|\drift_t(\omega)\|,\|\diff_t(\omega)\|)\le c_{AB}\,;
\] 
\item\label{it:orth-abstract}
there exists a constant $c_0$ such that 
for almost all $\omega\in \Omega$ it holds that
\[
\forall t\in[0,T]: X_t(\omega)\in \hypsurf^{\varepsilon_0} \Longrightarrow 
n (p(X_t(\omega)) )^\top\diff_t(\omega)\diff_t(\omega)^\top n (p(X_t(\omega) ))\ge c_0\,.
\] 
\end{enumerate}
Then there exists a constant $C$ such that for all 
$0<\varepsilon<\varepsilon_0/2$,
\[
\int_0^{T} \P \left( \{X_s \in \hypsurf^\varepsilon \} \right) ds
\le C\varepsilon \,.
\]
\end{theorem}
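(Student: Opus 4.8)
The plan is to reduce the $d$-dimensional occupation-time bound to finitely many one-dimensional ones, by passing to the signed distance from $\hypsurf$, and then to treat each one-dimensional problem with the occupation-times formula together with a uniform bound on the local time. First I would exploit the geometry of $\hypsurf$. Since $\hypsurf$ is a $C^3$-hypersurface of positive reach with finitely many connected components and the closure of $\hypsurf^{\varepsilon_0}$ has the unique closest point property, a neighbourhood of $\hypsurf$ can be covered by finitely many open sets $U_1,\dots,U_N$ on each of which a continuous unit normal field $n_j$ exists; this is the local flattening of $\hypsurf$ already exploited in \cite{sz2016b}. On a thin enough tube around $U_j\cap\hypsurf$ the signed distance $x\mapsto\langle x-\proj(x),n_j(\proj(x))\rangle$ is $C^2$, has gradient $n_j(\proj(x))$ (hence of norm $1$), and vanishes exactly on $\hypsurf$. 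After multiplying by smooth cutoffs supported inside $\hypsurf^{\varepsilon_0}$ and subordinate to the $U_j$, I obtain globally defined bounded functions $g_1,\dots,g_N\in C^2(\R^d)$ with bounded first and second derivatives, such that for $\varepsilon<\varepsilon_0/2$, on $\{|g_j(X_s)|<\varepsilon\}$ one has $X_s\in\hypsurf^{\varepsilon_0}$ with $g_j(X_s)$ equal to that signed distance, and such that $\hypsurf^\varepsilon\subseteq\bigcup_{j=1}^N\{|g_j|<\varepsilon\}$. By Tonelli's theorem,
\[
\int_0^T\P(X_s\in\hypsurf^\varepsilon)\,ds=\erw\int_0^T\1_{\hypsurf^\varepsilon}(X_s)\,ds\le\sum_{j=1}^N\erw\int_0^T\1_{\{|g_j(X_s)|<\varepsilon\}}\,ds,
\]
so it suffices to bound each summand by a constant multiple of $\varepsilon$.

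Next, fix $j$ and set $Y:=g_j(X)$. By It\^o's formula $Y$ is a continuous semimartingale with
\[
dY_t=\Big(\nabla g_j(X_t)^\top \drift_t+\tfrac12\tr\big(\diff_t\diff_t^\top\nabla^2 g_j(X_t)\big)\Big)dt+\nabla g_j(X_t)^\top\diff_t\,dW_t,
\]
so $d\langle Y\rangle_t=\|\diff_t^\top\nabla g_j(X_t)\|^2\,dt$. On $\{|Y_t|<\varepsilon\}$ with $\varepsilon<\varepsilon_0$ we have $X_t\in\hypsurf^{\varepsilon_0}$ and $\nabla g_j(X_t)=n_j(\proj(X_t))$, so by hypothesis \ref{it:orth-abstract}, $\|\diff_t^\top\nabla g_j(X_t)\|^2=n_j(\proj(X_t))^\top\diff_t\diff_t^\top n_j(\proj(X_t))\ge c_0$. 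Hence, by the occupation-times formula applied to $Y$,
\[
c_0\int_0^T\1_{\{|Y_s|<\varepsilon\}}\,ds\le\int_0^T\1_{\{|Y_s|<\varepsilon\}}\,d\langle Y\rangle_s=\int_{-\varepsilon}^{\varepsilon}L_T^a(Y)\,da\le2\varepsilon\,\sup_{a\in\R}L_T^a(Y),
\]
where $L^a(Y)$ denotes the semimartingale local time of $Y$ at level $a$.

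It then remains to bound $\erw\big[\sup_{a\in\R}L_T^a(Y)\big]$ by a constant independent of $\varepsilon$. For this I would use Tanaka's formula $L_T^a(Y)=|Y_T-a|-|Y_0-a|-\int_0^T\sign(Y_s-a)\,dY_s$: the first two terms are bounded by $2\|g_j\|_\infty$ for all $a$; the finite-variation part of the last integral is bounded uniformly in $a$ by $T$ times a constant depending only on $c_{AB}$ and the $C^2$-bounds of $g_j$, because $\nabla g_j$ and $\nabla^2 g_j$ vanish outside $\hypsurf^{\varepsilon_0}$, where hypothesis \ref{it:bounded-coeff} controls $\|\drift_t\|$ and $\|\diff_t\|$; and the martingale part $M_t^a:=\int_0^t\sign(Y_s-a)\,\nabla g_j(X_s)^\top\diff_s\,dW_s$ satisfies the \emph{deterministic} bound $\langle M^a\rangle_T\le\|\nabla g_j\|_\infty^2\,c_{AB}^2\,T$, uniformly in $a$. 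Since moreover $L_T^a(Y)=0$ whenever $|a|>\|g_j\|_\infty$, a Kolmogorov-type continuity estimate in the parameter $a$ (equivalently, the Burkholder--Davis--Gundy inequality together with a Barlow--Yor-type bound for the supremum of local times) gives $\erw[\sup_{a\in\R}L_T^a(Y)]\le C_j$, with $C_j$ depending only on $T$, $c_{AB}$, $c_0$ and the $C^2$-data of $g_j$. Combining this with the previous display and summing over $j=1,\dots,N$ yields the claim with $C=\tfrac{2}{c_0}\sum_{j=1}^N C_j$.

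The parts I expect to be delicate are, on the geometric side, organizing the finite cover and the cutoffs so that the $g_j$ are globally $C^2$, inherit the non-degeneracy from hypothesis \ref{it:orth-abstract}, and have derivatives supported inside $\hypsurf^{\varepsilon_0}$ so that hypothesis \ref{it:bounded-coeff} applies on their support; and, on the analytic side, the bound on $\erw[\sup_{a}L_T^a(Y)]$ that is uniform both in $a$ and in $\varepsilon$. The crucial point in the latter is that $\langle M^a\rangle_T$ admits a deterministic bound, which is what keeps the local-time estimate self-contained and avoids circularity.
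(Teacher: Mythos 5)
Your overall strategy---reduce to a one-dimensional process built from the signed distance, apply the occupation-times formula, and bound the local time via Tanaka---is the same as the paper's. But the construction of the one-dimensional process has a genuine gap. You build static functions $g_j$ by multiplying the local signed distance by cutoffs supported inside $\hypsurf^{\varepsilon_0}$, and then claim that on $\{|g_j(X_s)|<\varepsilon\}$ one has $X_s\in\hypsurf^{\varepsilon_0}$ with $\nabla g_j(X_s)=n_j(p(X_s))$. This is impossible as stated: any $x$ outside the support of the cutoff has $g_j(x)=0$, hence lies in $\{|g_j|<\varepsilon\}$ without being in $\hypsurf^{\varepsilon_0}$. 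If instead you only cut off tangentially (so that $g_j$ saturates at $\pm\mathrm{const}$ in the normal direction), the problem reappears at the tangential edges of $U_j$: there $g_j$ tapers through small values at points of $\hypsurf^{\varepsilon_0}\setminus\hypsurf^{\varepsilon}$ where $\nabla g_j$ is dominated by the tangential gradient of the cutoff, and hypothesis \ref{it:orth-abstract} gives no lower bound on $\nabla g_j^\top \diff\diff^\top\nabla g_j$ in tangential directions---the diffusion is allowed to be completely degenerate there. Either way the key inequality $d\langle Y\rangle_t\ge c_0\,dt$ on $\{|Y_t|<\varepsilon\}$ fails, and with it the step $c_0\int_0^T\1_{\{|Y_s|<\varepsilon\}}ds\le\int_0^T\1_{\{|Y_s|<\varepsilon\}}d\langle Y\rangle_s$. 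You flag this as ``delicate'' but do not resolve it, and it is precisely the crux.

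The paper avoids a static function altogether: it decomposes the path of $X$ by stopping times $\tau_k,\kappa_k$ into excursions into $\hypsurf^{\varepsilon_1}$ and out of $\hypsurf^{2\varepsilon_1}$ (with $\varepsilon_1=\varepsilon_0/2$), lifts a continuous unit normal \emph{along the path} on each incursion (Lemma \ref{lem:gamma-path}, which also sidesteps any global orientability issue), composes the signed distance with a $C^2$ mollifier $\lambda$ that acts only in the normal variable and saturates at $\pm\lambda(\varepsilon_1)$, and freezes $Y$ at these saturation values during the excursions away from $\hypsurf$. This yields exactly $\{Y_s\in(-\lambda(\varepsilon),\lambda(\varepsilon))\}=\{X_s\in\hypsurf^\varepsilon\}$ together with $\lambda'\ge(3/4)^2$ on the relevant range, which is what makes the quadratic-variation lower bound work. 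Separately, your final step is heavier than necessary: you invoke a Barlow--Yor-type bound for $\E\bigl[\sup_a L_T^a(Y)\bigr]$, whereas taking expectations \emph{before} bounding the integral, $\E\int_{-\varepsilon}^{\varepsilon}L_T^a\,da\le 2\varepsilon\,\sup_a\E\bigl(L_T^a\bigr)$, reduces the matter to the elementary Tanaka estimate of Lemma \ref{lem:local-time}; this is not an error, only an avoidable complication.
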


For the proof we will construct a one-dimensional It\^o process $Y$ with 
the property that $Y$ is close to $0$, if and only if $X$ is close to $\hypsurf$.
For the construction of $Y$ we decompose the path of $X$ into pieces
close to $\hypsurf$ and pieces farther away. These pieces are then 
mapped to $\R$ by using a signed distance of $X$ from $\hypsurf$
and pasted together in a continuous way. 

A signed distance to $\hypsurf$ is locally given by 
$D(x):=n(p(x))^\top(x-p(x))$, where $n$ is a local unit normal vector.

\begin{lemma}\label{lem:signed-distance}
For all $x\in \hypsurf^{\varepsilon_0}$ it holds that $D'(x)=n(p(x))^\top$.
\end{lemma}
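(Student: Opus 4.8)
The plan is to compute the gradient of the locally defined signed distance function $D(x) = n(p(x))^\top (x - p(x))$ directly, exploiting two structural facts: that $p(x)$ is the nearest point on $\hypsurf$ (so $x - p(x)$ is parallel to the normal there) and that $n$ is (locally) the unit normal field, hence orthogonal to the tangent space. First I would fix $x \in \hypsurf^{\varepsilon_0}$ in a neighbourhood where a $C^1$ choice of unit normal $n$ exists, and recall that on $\hypsurf^{\varepsilon_0}$ the nearest-point projection $p$ is well-defined and, by the positive reach / tubular neighbourhood theory (and the $C^3$ regularity of $\hypsurf$), continuously differentiable; write $p'(x)$ for its Jacobian. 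The classical fact I would invoke is that $x - p(x) = D(x)\, n(p(x))$, i.e.\ the vector from the foot point to $x$ is exactly $D(x)$ times the unit normal at the foot point.

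Next I would differentiate $D(x) = n(p(x))^\top (x - p(x))$ using the product and chain rules. This gives, for any direction $v \in \R^d$,
\[
D'(x) v = \bigl(n'(p(x)) p'(x) v\bigr)^\top (x - p(x)) + n(p(x))^\top \bigl(v - p'(x) v\bigr).
\]
Now I treat the two terms. For the first term, substitute $x - p(x) = D(x) n(p(x))$; then it becomes $D(x)\, \bigl(n'(p(x)) p'(x) v\bigr)^\top n(p(x))$. Since $n$ has constant length $1$, differentiating $n^\top n \equiv 1$ along $\hypsurf$ shows that $n'(\zeta)^\top n(\zeta) = 0$ (the derivative of the normal field is tangential), so this entire term vanishes. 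For the second term, $n(p(x))^\top v - n(p(x))^\top p'(x) v$: the image of $p'(x)$ lies in the tangent space $T_{p(x)}\hypsurf$ — this is because $p$ maps into $\hypsurf$, so $p'(x) v$ is a tangent vector at $p(x)$ — and $n(p(x))$ is orthogonal to that tangent space, so $n(p(x))^\top p'(x) v = 0$. What remains is $D'(x) v = n(p(x))^\top v$ for all $v$, i.e.\ $D'(x) = n(p(x))^\top$, as claimed.

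The main obstacle, and the point that deserves the most care, is justifying the differentiability of the projection map $p$ and the identity $x - p(x) = D(x) n(p(x))$ rigorously on $\hypsurf^{\varepsilon_0}$. This is standard tubular-neighbourhood / positive-reach material (Federer's work on sets of positive reach, or the implicit function theorem applied to the normal bundle parametrization $(\zeta, t) \mapsto \zeta + t\, n(\zeta)$, which is a $C^2$ diffeomorphism onto a neighbourhood of $\hypsurf$ because $\hypsurf$ is $C^3$ with bounded second fundamental form), but it needs to be cited or sketched since $D'$ only makes sense once we know $p$ and hence $D$ are differentiable. I would also note that the sign ambiguity in $n$ is harmless: flipping $n \mapsto -n$ flips $D \mapsto -D$ and hence $D' \mapsto -D'$ consistently, so the local formula $D'(x) = n(p(x))^\top$ holds for either choice. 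Once these points are settled, the computation above is routine and short.
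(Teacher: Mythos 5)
Your proof is correct, but it takes a different computational route from the paper's. The paper differentiates the representation $D(x)=\dsig\,\|x-p(x)\|$ with $\dsig\in\{-1,1\}$, obtaining $D'(x)=\dsig\, n(p(x))^\top$ up to the unknown sign, and then pins the sign down by a separate one-dimensional computation: $D(x+\psi n(p(x)))=D(x)+\psi$, so the directional derivative along the normal is $+1$. You instead differentiate $D(x)=n(p(x))^\top(x-p(x))$ directly by the product rule, which carries the correct sign automatically and so dispenses with the directional-derivative step; the price is that you must kill the extra term $\bigl(n'(p(x))p'(x)v\bigr)^\top(x-p(x))$, which you do correctly via $x-p(x)=D(x)\,n(p(x))$ and the identity $n^\top n'\,w=0$ for tangent vectors $w$ (differentiating $\|n\|^2\equiv 1$). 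The two proofs also justify the shared key fact $n(p(x))^\top p'(x)=0$ differently: the paper derives $(x-p(x))^\top p'(x)=0$ from the first-order condition for the minimization $h(b)=\|x-p(x+b)\|^2$ at $b=0$ and then divides out the normal direction (which forces it to work on $\hypsurf^{\varepsilon_0}\setminus\hypsurf$ first and extend to $\hypsurf$ by continuity of $D'$), whereas you observe that $p$ maps into $\hypsurf$, so the image of $p'(x)$ lies in the tangent space $T_{p(x)}\hypsurf$, to which $n(p(x))$ is orthogonal; your argument has the small advantage of working uniformly for all $x\in\hypsurf^{\varepsilon_0}$, including $x\in\hypsurf$. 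Your closing caveat about the regularity of $p$ is well placed: the paper relies on exactly this, citing the proof of Foote's theorem for $D\in C^2$ (hence $p\in C^1$) on the tubular neighbourhood, and your argument additionally uses differentiability of $n\circ p$, which is covered by $\hypsurf\in C^3$.
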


\begin{proof}
Fix $x\in \hypsurf^{\varepsilon_0}\backslash \hypsurf$ and  consider the
function $h$ defined by $h(b):=\|x-p(x+b)\|^2$.
By definition of the projection map $p$, $h$ has a minimum in $b=0$,
such that $h'(0)=0$. Hence from $h'(b)=-2(x-p(x+b))^\top p'(x+b)$, 
we get $(x-p(x))^\top p'(x)=0$. This implies $n(p(x))^\top p'(x)=0$,
since $(x-p(x))$ is a scalar multiple of $n(p(x))$.

Using that $D(x)=\dsig \|x-p(x)\|$ for an $\dsig\in \{-1,1\}$, we compute
\begin{align}\label{eq:Dstrich}
\nonumber D'(x)&=\dsig \|x-p(x)\|^{-1} (x-p(x))^\top (\id-p'(x))
=\dsig \,n(p(x))^\top (\id-p'(x))\\
&=\dsig \big(n(p(x))^\top-n(p(x))^\top p'(x)\big)
=\dsig n(p(x))^\top\,.
\end{align}
For $\psi\in \R$ with $|\psi|$ small we get
\begin{align*}
D(x+\psi n(p(x)))&=n\Big(p\big(x+\psi n(p(x))\big)\Big)^\top \Big(x+\psi n(p(x))-p\big(x+\psi n(p(x))\big)\Big)\\
&=n(p(x))^\top (x+\psi n(p(x))-p(x))=D(x)+\psi\,,
\end{align*}
such that the directional derivative of $D$ in direction $n(p(x))$ in $x$ is 1.
From this and from \eqref{eq:Dstrich} it follows that
$D'(x)=n(p(x))^\top$. 
This also holds for $x\in \hypsurf$ by the continuity of $D'$.  
\end{proof}

The following lemma states that for any continuous curve $\gamma$ in $\hypsurf^{\varepsilon_0}$ there is a continuous path of unit normal vectors,
such that to every point of $\gamma$ we can assign a signed distance in 
a continuous way.

\begin{lemma}\label{lem:gamma-path}
Let $\gamma:[a,b]\longrightarrow\hypsurf^{\varepsilon_0}$ be a continuous function.
Then there exists $m:[a,b]\longrightarrow\R^d$ such that 
\begin{enumerate}
\item $m$ is continuous;
\item $\|m(t)\|=1$ for all $t\in [a,b]$;
\item $m(t)$ is orthogonal to $\hypsurf$ in the point $p(\gamma(t))$ for all $t\in [a,b]$.
\end{enumerate}
\end{lemma}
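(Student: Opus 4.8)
The plan is to patch together local choices of unit normal vector along the curve by a compactness argument. First I would note that the metric projection $p$ is continuous on $\hypsurf^{\varepsilon_0}$ — a standard property of sets of positive reach — so that $p\circ\gamma:[a,b]\longrightarrow\hypsurf$ is continuous. Next, recall from the discussion preceding Definition \ref{def:positivereach} that every $\xi\in\hypsurf$ admits an open neighbourhood $U_\xi\subseteq\hypsurf$ on which there is a continuously differentiable map $n_\xi:U_\xi\longrightarrow\R^d$ with $\|n_\xi(\zeta)\|=1$ and $n_\xi(\zeta)$ orthogonal to the tangent space of $\hypsurf$ at $\zeta$ for every $\zeta\in U_\xi$. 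The sets $(p\circ\gamma)^{-1}(U_\xi)$, $\xi\in\hypsurf$, form an open cover of the compact interval $[a,b]$; by the Lebesgue number lemma there exist a partition $a=t_0<t_1<\dots<t_N=b$ and points $\xi_1,\dots,\xi_N\in\hypsurf$ with $p(\gamma([t_{j-1},t_j]))\subseteq U_{\xi_j}$ for $j=1,\dots,N$.

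Then I would construct $m$ inductively on the subintervals $[t_{j-1},t_j]$. On $[t_0,t_1]$ put $m(t):=n_{\xi_1}(p(\gamma(t)))$; this is continuous, being a composition of continuous maps, has unit length, and is orthogonal to $\hypsurf$ at $p(\gamma(t))$. Assume $m$ has been defined continuously on $[t_0,t_j]$ with these properties. Both $m(t_j)$ and $n_{\xi_{j+1}}(p(\gamma(t_j)))$ are unit vectors orthogonal to the tangent space of $\hypsurf$ at $p(\gamma(t_j))$; since this orthogonal complement is one-dimensional, there is $\epsilon_{j+1}\in\{-1,+1\}$ with $m(t_j)=\epsilon_{j+1}\,n_{\xi_{j+1}}(p(\gamma(t_j)))$. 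Extend $m$ to $[t_j,t_{j+1}]$ by setting $m(t):=\epsilon_{j+1}\,n_{\xi_{j+1}}(p(\gamma(t)))$. The two definitions agree at $t_j$, so $m$ remains continuous, it has unit length, and $m(t)$ is orthogonal to $\hypsurf$ at $p(\gamma(t))$ for all $t\in[t_j,t_{j+1}]$. After $N$ steps this produces $m$ on all of $[a,b]$ with properties (1)--(3).

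The argument has no genuine analytic difficulty; the only points that need care are the continuity of the projection map $p$ on the tube $\hypsurf^{\varepsilon_0}$ — so that the compositions above are continuous — and the availability of the local $C^1$ unit normal fields $n_\xi$, both guaranteed by Assumption \ref{ass:existence}.\ref{ass:existence-pwlip}, i.e.\ by $\hypsurf$ being a $C^3$-hypersurface of positive reach as recalled before Definition \ref{def:positivereach}. The inductive patching is simply the standard way of transporting an orientation along a path, using only that the normal bundle of a hypersurface has one-dimensional fibres, so that any two local unit normals at a common point differ merely by a sign.
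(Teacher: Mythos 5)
Your proof is correct. It rests on the same key fact as the paper's proof --- that on a connected open subset of $\hypsurf$ a local unit normal field exists and is unique up to a global sign, so that two local choices at a common point of a hypersurface differ only by a factor $\pm 1$ --- but you organize the globalization differently. The paper runs a supremum (``continuous induction'') argument: it considers the set $\setS$ of times $s$ for which a continuous unit normal field exists along $p\circ\gamma|_{[a,s]}$, notes $\setS\neq\emptyset$, and shows that $s_1=\sup \setS$ admits a continuous extension of the field up to $s_1$ and, if $s_1<b$, beyond it, forcing $s_1=b$. You instead compactify up front: you pull back a cover of $p(\gamma([a,b]))$ by neighbourhoods carrying local normal fields, extract a finite partition via the Lebesgue number lemma, and patch the finitely many pieces inductively with a sign correction at each junction. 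The two arguments are interchangeable here; yours makes the finiteness explicit and avoids the slightly delicate step in the paper where one must argue that the field extends continuously up to and past the supremum, while the paper's version avoids invoking the Lebesgue number lemma. Both require the continuity of the nearest-point projection $p$ on $\hypsurf^{\varepsilon_0}$, which you correctly identify as the standard consequence of positive reach that makes $p\circ\gamma$ continuous.
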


\begin{proof}
For $\xi\in\hypsurf$ we denote the tangent space to $\hypsurf$ 
in $\xi$ by $\tang_\xi$. Let 
\[
\setS:=\{a\le s\le b: \exists m:[a,s]\longrightarrow\R^d\text{ continuous},
\;\|m(t)\|=1,\; m(t)\bot \tang_{p(\gamma(t))}\;\forall t\in [a,s]\}. 
\]
The set $\setS$ is nonempty and its elements are bounded by $b$. Let $s_1:=\sup \setS$.
There exists an open and connected subset $U\subseteq\hypsurf$ such that 
$p(\gamma(s_1))\in U$, and a unit normal vector $n_1:U\longrightarrow \R^d$.

Since $U$ is open and $p\circ \gamma$ is continuous, there exists $\eta>0$
such that $p(\gamma([s_1-\eta,s_1]))\subseteq U$. By the definition of
$s_1$ there exists $s\in (s_1-\eta,s_1)$ and 
$m:[a,s]\longrightarrow \R^d$
continuous, with $\|m(t)\|=1$ and $m(t)\bot \tang_{p(\gamma(t))}$ for all $ t\in [a,s]$.

Since $n_1$ is unique up to a factor $\pm 1$, the mapping 
\(n_1\circ p \circ \gamma\) either coincides  with $m$ or $-m$ on 
$(s_1-\eta,s)$. Without loss of generality we may assume that the former is the case.
Thus we can extend $m$ continuously to $[a,s_1]$ by defining 
$m(t):= n_1(p(\gamma(t)))$ for all $t\in (s,s_1]$.

Now, if $s_1$ was strictly smaller than $b$, then we could use the same mapping
\(n_1\circ p \circ \gamma\) to extend $m$ continuously beyond $s_1$, 
contradicting the definition of $s_1$.
\end{proof}

We will need the following estimate on the local time of a 
one-dimensional It\^o process.

\begin{lemma}\label{lem:local-time}
Let $Y=(Y_t)_{t\ge 0}$ be an It\^o process with bounded 
and progressively measurable coefficients $\hat \drift=(\hat \drift_t)_{t\ge0},\hat \diff=(\hat \diff_t)_{t\ge0}$.

Then $\sup_{y\in \R}\E(L_T^y(Y))
\le \left(3 T^2\|\hat \drift\|_\infty^2 +\frac{3}{2}T \|\hat \diff_s\|_\infty^2\right)^{1/2}$.
\end{lemma}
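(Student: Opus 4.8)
The plan is to bound the expected local time $\E(L_T^y(Y))$ uniformly in the level $y$ by combining the Tanaka--Meyer formula with a moment estimate on $|Y_t - y|$ and a clever application of Jensen's inequality. First I would recall the Tanaka--Meyer formula: for a one-dimensional It\^o process $Y$ with decomposition $dY_t = \hat\drift_t\,dt + \hat\diff_t\,dW_t$, and for any fixed level $y$,
\[
|Y_T - y| = |Y_0 - y| + \int_0^T \sign(Y_s - y)\,dY_s + L_T^y(Y)\,,
\]
where $L_T^y(Y)$ is the (right) local time at $y$. Rearranging and taking expectations, the stochastic integral $\int_0^T \sign(Y_s - y)\,\hat\diff_s\,dW_s$ is a martingale (its integrand is bounded since $\hat\diff$ is bounded), so it vanishes in expectation, leaving
\[
\E(L_T^y(Y)) = \E|Y_T - y| - |Y_0 - y| - \E\!\int_0^T \sign(Y_s - y)\,\hat\drift_s\,ds \le \E|Y_T - Y_0| + T\|\hat\drift\|_\infty\,,
\]
where I used $|Y_T - y| - |Y_0 - y| \le |Y_T - Y_0|$ (reverse triangle inequality) and $|\sign(\cdot)|\le 1$. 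Note this last bound is uniform in $y$, which is exactly what we want for the supremum.

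Next I would estimate $\E|Y_T - Y_0|$. By Jensen's inequality, $\E|Y_T - Y_0| \le (\E|Y_T - Y_0|^2)^{1/2}$. Writing $Y_T - Y_0 = \int_0^T \hat\drift_s\,ds + \int_0^T \hat\diff_s\,dW_s$, I would use $(a+b)^2 \le 2a^2 + 2b^2$, then Cauchy--Schwarz on the drift term, $\big(\int_0^T \hat\drift_s\,ds\big)^2 \le T\int_0^T \hat\drift_s^2\,ds \le T^2\|\hat\drift\|_\infty^2$, and It\^o's isometry on the martingale term, $\E\big(\int_0^T \hat\diff_s\,dW_s\big)^2 = \E\int_0^T \hat\diff_s^2\,ds \le T\|\hat\diff\|_\infty^2$. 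This gives $\E|Y_T - Y_0|^2 \le 2T^2\|\hat\drift\|_\infty^2 + 2T\|\hat\diff\|_\infty^2$, hence $\E|Y_T - Y_0| \le (2T^2\|\hat\drift\|_\infty^2 + 2T\|\hat\diff\|_\infty^2)^{1/2}$.

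Combining the two displays yields $\E(L_T^y(Y)) \le (2T^2\|\hat\drift\|_\infty^2 + 2T\|\hat\diff\|_\infty^2)^{1/2} + T\|\hat\drift\|_\infty$, and since the right-hand side does not depend on $y$, the same bound holds for $\sup_{y\in\R}\E(L_T^y(Y))$. To match the claimed form $\big(3T^2\|\hat\drift\|_\infty^2 + \tfrac32 T\|\hat\diff\|_\infty^2\big)^{1/2}$, I would fold the $T\|\hat\drift\|_\infty$ term inside the square root using $(u + v)^{1/2} \le $ something, or more cleanly keep the drift contribution together: one can bound $\E(L_T^y(Y)) \le \E|Y_T - Y_0| + T\|\hat\drift\|_\infty$ and estimate instead $\E\big|\int_0^T\hat\drift_s\,ds + \int_0^T\hat\diff_s\,dW_s\big| + T\|\hat\drift\|_\infty$ by absorbing the extra drift term, using $(\sqrt{a}+b)^2 \le$ ... — in any case the constants $3$ and $3/2$ come from a slightly more careful grouping (e.g. treating the total "effective drift" contribution as $2T\|\hat\drift\|_\infty$ before squaring, giving $4T^2\|\hat\drift\|_\infty^2$... ); I would tune the elementary inequalities to land on exactly $3$ and $3/2$.

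The only mild obstacle is the bookkeeping of constants at the very end: the probabilistic content (Tanaka's formula, the martingale property, It\^o's isometry, Jensen) is entirely routine, but getting precisely the coefficients $3$ and $3/2$ requires being careful about whether the $T\|\hat\drift\|_\infty$ term is added outside or absorbed inside the square root, and which elementary inequality (e.g. $(a+b)^2\le 2a^2+2b^2$ versus $(a+b+c)^2 \le 3a^2+3b^2+3c^2$) is used where. I expect the author bounds $\E|Y_T-y| - |Y_0-y|$ together with the drift term as $\E|\int_0^T \hat\drift_s\,ds| \le T\|\hat\drift\|_\infty$ plus the stochastic part, then applies the three-term inequality to $\big(T\|\hat\drift\|_\infty + \E|\int\hat\diff\,dW|\big)$-type expression, which naturally produces the factor $3$.
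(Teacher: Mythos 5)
Your probabilistic skeleton (Tanaka--Meyer, killing or bounding the $\sign$-weighted integrals, Cauchy--Schwarz and It\^o's isometry for $\E|Y_T-Y_0|$) is sound and would certainly yield a bound of the form $C_1 T\|\hat\drift\|_\infty + C_2 T^{1/2}\|\hat\diff\|_\infty$, but the argument as written cannot be tuned to produce the stated constants, and the reason is not mere bookkeeping. First, the paper uses the Karatzas--Shreve normalization of local time, for which the Tanaka--Meyer formula reads $|Y_T-y|=|Y_0-y|+\int_0^T\sign(Y_s-y)\,dY_s+2L_T^y(Y)$, with a factor $2$ in front of the local time; you wrote the formula without it. This factor is essential: in the pure-diffusion case $\hat\drift\equiv 0$ your chain of estimates gives $\E(L_T^y)\le(2T\|\hat\diff\|_\infty^2)^{1/2}$, which is strictly larger than the claimed $(\tfrac32 T\|\hat\diff\|_\infty^2)^{1/2}$, so no choice of elementary inequalities downstream can rescue the constants within your scheme. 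Second, the paper does not take expectations to annihilate the martingale terms. It keeps everything pathwise: $2L_T^y\le 2\int_0^T|\hat\drift_s|\,ds+|\int_0^T\hat\diff_s\,dW_s|+|\int_0^T(\1_{\{Y_s>y\}}-\1_{\{Y_s<y\}})\hat\diff_s\,dW_s|$, applies $(a+b+c)^2\le 3(a^2+b^2+c^2)$, uses It\^o's isometry on each of the two stochastic integrals to get $4\,\E(L_T^y(Y)^2)\le 12T^2\|\hat\drift\|_\infty^2+6T\|\hat\diff\|_\infty^2$, and only then applies Cauchy--Schwarz in the form $\E(L)\le(\E(L^2))^{1/2}$. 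That is where $3$ and $\tfrac32$ come from; bounding $\E(L)$ directly, as you do, separates the drift contribution outside the square root and loses them.

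That said, the defect is quantitative rather than structural: in the application (the occupation-time estimate of Theorem \ref{th:occtime}) only finiteness of $\sup_y\E(L_T^y(Y))$ with some constant depending on $T$, $\|\hat\drift\|_\infty$, $\|\hat\diff\|_\infty$ is used, and your argument does deliver that. To actually prove the lemma as stated you need (i) the factor $2$ from the Karatzas--Shreve version of Tanaka--Meyer, and (ii) the second-moment route $\E(L)\le(\E(L^2))^{1/2}$ combined with the three-term inequality, rather than taking expectations first.
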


The claim is a special case of \cite[Lemma 3.2]{ngo2016a}. 
We give a proof for the convenience of the reader.

\begin{proof}
From the Meyer-Tanaka formula \cite[Section 3.7, Eq.~(7.9)]{karatzas1991} we have
\begin{align*}
2 L_T^y(Y)&=|Y_T-y|-|Y_0-y|-\int_0^T \left(\1_{\{Y_s>y\}}-\1_{\{Y_s<y\}}\right)dY_s\\
&\le|Y_T-Y_0|+\left|\int_0^T \left(\1_{\{Y_s>y\}}-\1_{\{Y_s<y\}}\right)dY_s\right|\\
&\le\left|\int_0^T \hat \drift_s ds\right|+ \left|\int_0^T \hat \diff_s dW_s\right|
+\left|\int_0^T \left(\1_{\{Y_s>y\}}-\1_{\{Y_s<y\}}\right)\hat \drift_sds\right|
+\left|\int_0^T \left(\1_{\{Y_s>y\}}-\1_{\{Y_s<y\}}\right)\hat \diff_s dW_s\right|\\
&\le2\int_0^T |\hat \drift_s |ds+ \left|\int_0^T \hat \diff_s dW_s\right|
+\left|\int_0^T \left(\1_{\{Y_s>y\}}-\1_{\{Y_s<y\}}\right)\hat \diff_s dW_s\right|\,.
\end{align*}
Using the inequality $(a+b+c)^2\le 3(a^2+b^2+c^2)$ we get 
\[
4 L_T^y(Y)^2\le 12 \|\hat \drift\|_\infty^2 T^2+3\left|\int_0^T \hat \diff_s dW_s\right|^2 
+3\left|\int_0^T \left(\1_{\{Y_s>y\}}-\1_{\{Y_s<y\}}\right)\hat \diff_s dW_s\right|^2\,,
\]
and, using It\^o's $L^2$-isometry,
\[
4\E\left( L_T^y(Y)^2\right)\le 12 \|\hat \drift\|_\infty^2 T^2+6\int_0^T \hat \diff_s^2 ds\le 12 \|\hat \drift\|_\infty^2 T^2+6T \|\hat \diff\|_\infty^2 \,.
\]
The claim now follows by applying the Cauchy-Schwarz-inequality and taking the supremum over all $y\in \R$.
\end{proof}

We are ready to prove the result of this section.

\begin{proof}[Proof of Theorem \ref{th:occtime}]
Let $\varepsilon_1=\varepsilon_0/2$.
Define a mapping $\lambda: \R \longrightarrow \R$ by 
\[
\lambda(z)= \begin{cases}
z- \frac{2}{3 \varepsilon_1^2}z^3 +\frac{1}{5 \varepsilon_1^4}z^5
& |z|\le \varepsilon_1\\
\frac{8 \varepsilon_1}{15} & z> \varepsilon_1\\
-\frac{8 \varepsilon_1}{15} & z< -\varepsilon_1\,.
\end{cases}
\]
Note that $\lambda'(0)=1$ and $\lambda'(\pm\varepsilon_1)=\lambda''(\pm\varepsilon_1)=0$,
so that $\lambda\in C^2$.

Next we decompose the path of $X$: 
let $\tau_{0}:=\inf\{t\ge 0: X_t\in \hypsurf^{\varepsilon_1}\}$. In particular
we have $\tau_0=0$, if $X_0\in \hypsurf^{\varepsilon_1}$.
For $k\in \N_0$, define 
\begin{align*}
\kappa_{k+1}&:=\inf\{t\ge \tau_{k}: X_t\notin \hypsurf^{2\varepsilon_1}\}\wedge T\,,\\
\tau_{k+1}&:=\inf\{t\ge \kappa_{k+1}: X_t\in \hypsurf^{\varepsilon_1}\}\wedge T\,.
\end{align*}
By Lemma \ref{lem:gamma-path} there exist continuous
$m_k:[\tau_k,\kappa_{k+1}]\longrightarrow \R^d$, with $\|m_k(t)\|=1$ and
$m_k(t)\bot \tang_{p(X_t)}$ for all $t\in [\tau_k,\kappa_{k+1}]$.
Without loss of generality $m_0$ can be chosen such that 
$m_0(\tau_0)^\top  (X_{\tau_0}-p(X_{\tau_0}) )\ge 0$.
We construct a one-dimensional process  $Y$ as follows:
\begin{align*}
Y_{t}&=
\begin{cases}
\lambda(m_0(\tau_0)^\top (X_{\tau_0}-p(X_{\tau_0})))
& t\le\tau_0 \\
\lambda(m_k(t)^\top (X_{t}-p(X_{t})))
& t\in [\tau_k,\kappa_{k+1}] \\
\lambda(m_k(\kappa_k)^\top (X_{\kappa_k}-p(X_{\kappa_k})))
& t\in [\kappa_k,\tau_{k}]\,,
\end{cases} 
\end{align*}
where without loss of generality the $m_k$ are chosen such that 
\begin{align}\label{eq:orient1}
\lambda(m_{k+1}(\tau_{k+1})^\top (X_{\tau_{k+1}}-p(X_{\tau_{k+1}})))
=\lambda(m_k(\kappa_k)^\top (X_{\kappa_k}-p(X_{\kappa_k})))\,.
\end{align}
Note that by construction both sides of \eqref{eq:orient1} can only 
take the values $\pm\lambda(\varepsilon_1)$.

We have thus constructed a continuous 
$[\lambda(-\varepsilon_1),\lambda(\varepsilon_1)]$-valued 
process $Y$ with the property that the occupation time of $Y$ in an
environment of $0$ is the same as the occupation time of $X$ in an environment of $\hypsurf$,
i.e.~$Y\in (-\lambda(\varepsilon),\lambda(\varepsilon))$, iff $X\in \hypsurf^\varepsilon$ for all $0<\varepsilon<\varepsilon_1$.

To show that $Y$ is an It\^o process, we want to use It\^o's formula.
For this we recognize that $Y$, depending on its proximity to $\hypsurf$,
is either constant or locally of the form 
$Y_t=\lambda(n(p(X_t))^\top(X_t-p(X_t)))$ for a suitable choice 
of the unit normal vector.
Denote $\sdist(x)=n(p(x))^\top  (x-p(x) )$. The function $\sdist$ is locally
a signed distance to $\hypsurf$ and $D \in C^2$. This can be seen by following
the proof of \cite[Theorem 1]{foote1984}.
Hence, we may apply It\^o's formula to get 
\begin{align*}
dY_t=&\lambda'(\sdist(X_t))\sdist'(X_t) \drift_t dt +\lambda'(\sdist(X_t)) \sdist'(X_t) \diff_t dW_t
+\frac{1}{2}\tr\left(\diff_t^\top\lambda''(\sdist(X_t))\diff_t\right)dt\,.
\end{align*}
By Lemma \ref{lem:signed-distance} we have $\sdist'(x)=n(p(x))^\top$, and hence
\[
(\lambda(\sdist(x)))''=
(\lambda'(\sdist(x))n(p(x))^\top)'
=\lambda''(\sdist(x))n(p(x))n(p(x))^\top+\lambda'(\sdist(x))n'(p(x))\,.
\]
Since $\lambda'$ and $\lambda''$ are bounded by construction,
$\|n(p(x))n(p(x))^\top\|=1$, $\|n'\|$  is bounded
(c.f.~the remark on Assumption \ref{ass:existence}.\ref{ass:existence-pwlip}), and by
Assumption \ref{it:bounded-coeff} of the theorem, the 
coefficients   of $Y$ are uniformly bounded.
Therefore $dY_t=\hat \drift_t dt+ \hat \diff_t dW_t$, with bounded 
and  progressively measurable  $\hat \drift,\hat \diff$.

Let $0<\varepsilon\le \varepsilon_1/2$. For all $|z|\le \varepsilon$, we have
$\lambda'(z)\ge \left(\frac{3}{4}\right)^2$. Thus by Assumption \ref{it:orth-abstract} of the theorem,
\begin{align*}
\left(\frac{3}{4}\right)^2 c_0^2\int_0^t\1_{\left\{X_s\in \hypsurf^\varepsilon\right\}}ds
&= \left(\frac{3}{4}\right)^2 c_0^2\int_0^t\1_{\left\{Y_s\in (-\lambda(\varepsilon),\lambda(\varepsilon))\right\}}ds\\
&\le 
\int_0^t
\1_{\left\{Y_s\in (-\lambda(\varepsilon),\lambda(\varepsilon))\right\}}
\lambda'\left(D(X_s)\right)^2
n(p(X_s))^\top\diff_s\diff_s^\top n(p(X_s))ds\\
&= 
\int_0^t\1_{\left\{Y_s\in (-\lambda(\varepsilon),\lambda(\varepsilon))\right\}}
d\left[Y\right]_s\,.
\end{align*}
By the occupation time formula 
\cite[Chapter 3, 7.1 Theorem]{karatzas1991} for one-dimensional continuous 
semimartingales, we get
\begin{align*}
\int_0^T\P\left(\{X_s\in \hypsurf^\varepsilon\}\right)ds
&\le \smallconsti^{2}
\E\left(\int_0^T\1_{\{Y_s\in (-\lambda(\varepsilon),\lambda(\varepsilon))\}}d\left[Y\right]_s\right)\\
&=2 \smallconsti^{2} \E\left(\int_\R \1_{(-\lambda(\varepsilon),\lambda(\varepsilon))}(y)L_T^{y}\left(Y\right)dy\right)\\
&\le  \frac{4^3}{3^2c_0^2}\, \sup_{y\in \R}\E\left(L_T^{y}\left(Y\right)\right)\varepsilon
\,.
\end{align*}

\end{proof}

\subsection{The transformation}
\label{subsec:transformation}

The proof of convergence is based on a transformation that removes the discontinuity from the drift and makes the drift Lipschitz while preserving the Lipschitz property of the diffusion coefficient.
A suitable transform is presented in \cite{sz2016b}. We recall it here.\\

Define $G:\R^d\longrightarrow\R^d$, 
\[
G(x)=\begin{cases}
 x+\tilde \phi(x) \alpha(\proj(x))&  x\in \hypsurf^{\varepsilon_0}\\
x & x\in \R^d\backslash \hypsurf^{\varepsilon_0}\,,
\end{cases}
\]
where $\varepsilon_0>0$ is smaller than the reach of $\hypsurf$, see 
Assumption \ref{ass:existence}.\ref{ass:existence-pwlip}, 
$\alpha$ is the function defined in Assumption \ref{ass:existence}.\ref{ass:existence-alpha}, and  \begin{align*}
 \tilde \phi(x)=n(\proj(x))^\top(x-\proj(x))
\|x-\proj(x)\|\phi\left(\frac{\|x-\proj(x)\|}{c}\right)\,,                                                                                                                                       
 \end{align*}
with positive constant $c$ and
\begin{align*}
\phi(u)=
\begin{cases}
(1+u)^4(1-u)^4 & |u|\le 1\\
0 & |u|> 1\,.
\end{cases}
\end{align*}

If $c$ is chosen sufficiently small, see \cite[Lemma 3.18]{sz2016b}, $G$ is invertible by \cite[Theorem 3.14]{sz2016b}.
Furthermore, It\^o's formula holds for $G$ and $G^{-1}$ by \cite[Theorem 3.19]{sz2016b}.
By \cite[Lemma 4]{aap-corr}, $G''$ is bounded and piecewise Lipschitz with exceptional set $\Theta$.
Hence, \cite[Lemma 3.6]{sz2016b}, \cite[Lemma 3.8]{sz2016b}, and \cite[Lemma 3.11]{sz2016b} assure that $G'$ is Lipschitz.
\\

With this we can define a process $Z=(Z_t)_{t\ge 0}$ by $Z_t=G(X_t)$, which solves the SDE
\begin{align}\label{eq:SDEtransf}
 dZ_t =\tilde \mu(Z_t) dt+\tilde \sigma(Z_t) dW_t\,,
\end{align}
where
\begin{align*}
\tilde \mu(z)&=G'(G^{-1}(z))\mu(G^{-1}(z))+\frac{1}{2}\tr \left(\sigma(G^{-1}(z))^\top G''(G^{-1}(z))\sigma(G^{-1}(z))\right)\,,\\
\tilde \sigma(z)&=G'(G^{-1}(z)) \sigma(G^{-1}(z)) \,.
\end{align*}
From \cite[Theorem 3.20]{sz2016b} we know that $\tilde\mu$ and $\tilde \sigma$ are
Lipschitz, and hence the solution to \eqref{eq:SDEtransf} can be approximated with strong order $1/2$ using the Euler-Maruyama scheme.

\section{Main result}
\label{sec:result}

We are ready to formulate the main result.

\begin{theorem}\label{thm:conv}
 Let Assumption \ref{ass:existence} hold. Then the Euler-Maruyama method
\eqref{eq:euler} converges to the solution of SDE \eqref{eq:sde} with strong
order $\strongrate$ for arbitrarily small $\othereps>0$, 
i.e.~there exists a
constant $C$ such that for all
$\othereps>0$ it holds that for sufficiently small step size $\delta>0$,
\begin{align*}
  \E\left(\sup_{t\in[0,T]} \|X_t-\appx_t\|^2\right)^{1/2} \le C \delta^{\strongrate}\,.
 \end{align*}
\end{theorem}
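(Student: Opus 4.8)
The plan is to follow the route announced in the introduction: compare the Euler-Maruyama scheme $\appx$ with the scheme of \cite{sz2016b}, that is, with the Euler-Maruyama approximation $\appz$ of the transformed SDE \eqref{eq:SDEtransf}, started at $\appz_0=G(x)$. Recall that $Z=G(X)$ solves \eqref{eq:SDEtransf} with globally Lipschitz coefficients $\tilde\mu,\tilde\sigma$, which are moreover bounded (being built from the bounded $\mu,\sigma,G',G''$); hence \cite[Theorem~10.2.2]{kloeden1992} gives $\E(\sup_{t\le T}\|Z_t-\appz_t\|^2)\le C\delta$, and, $G^{-1}$ being globally Lipschitz, $\hat X^\delta:=G^{-1}(\appz)$ satisfies $\E(\sup_{t\le T}\|X_t-\hat X^\delta_t\|^2)\le C\delta$. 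Since $\|\appx_t-\hat X^\delta_t\|\le L_{G^{-1}}\|G(\appx_t)-\appz_t\|$, the triangle inequality reduces the theorem to bounding $\theta_T:=\E(\sup_{t\le T}\|G(\appx_t)-\appz_t\|^2)$.

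To estimate $\theta_T$ I would set $e_t:=G(\appx_t)-\appz_t$ (so $e_0=0$) and write an It\^o equation for $e$. It\^o's formula applies to $G(\appx)$ by \cite[Theorem~3.19]{sz2016b} once one knows $\appx$ spends zero time on $\hypsurf$; this follows from Theorem~\ref{th:occtime} applied to the It\^o process $\appx$, whose coefficients are bounded by Assumption~\ref{ass:existence}.\ref{ass:existence-musigmabounded} and which, for $\delta$ small, keeps $n(p(\appx_t))^\top\sigma(\appx_\es)\sigma(\appx_\es)^\top n(p(\appx_t))$ bounded below near $\hypsurf$ by continuity of $\sigma$ and Assumption~\ref{ass:existence}.\ref{ass:existence-nonparallelity}. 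Subtracting \eqref{eq:SDEtransf} from the resulting expansion and using $\tilde\mu(G(y))=G'(y)\mu(y)+\tfrac12\tr(\sigma(y)^\top G''(y)\sigma(y))$ and $\tilde\sigma(G(y))=G'(y)\sigma(y)$, the drift of $e$ equals $r_s+(\tilde\mu(G(\appx_s))-\tilde\mu(\appz_s))+(\tilde\mu(\appz_s)-\tilde\mu(\appz_\es))$ and its diffusion equals $G'(\appx_s)(\sigma(\appx_\es)-\sigma(\appx_s))+(\tilde\sigma(G(\appx_s))-\tilde\sigma(\appz_s))+(\tilde\sigma(\appz_s)-\tilde\sigma(\appz_\es))$, where $r_s:=G'(\appx_s)(\mu(\appx_\es)-\mu(\appx_s))+\tfrac12\tr(\sigma(\appx_\es)^\top G''(\appx_s)\sigma(\appx_\es)-\sigma(\appx_s)^\top G''(\appx_s)\sigma(\appx_s))$. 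By the global Lipschitz continuity of $\tilde\mu,\tilde\sigma$, $\|\tilde\mu(G(\appx_s))-\tilde\mu(\appz_s)\|+\|\tilde\sigma(G(\appx_s))-\tilde\sigma(\appz_s)\|\le C\|e_s\|$; by that of $\tilde\mu,\tilde\sigma,\sigma$ together with boundedness of $G',G''$, the increments $\tilde\mu(\appz_s)-\tilde\mu(\appz_\es)$, $\tilde\sigma(\appz_s)-\tilde\sigma(\appz_\es)$ and the trace part of $r_s$ are bounded by $C(\|\appx_s-\appx_\es\|+\|\appz_s-\appz_\es\|)$, whose second moment is $O(\delta)$ since $\mu,\sigma,\tilde\mu,\tilde\sigma$ are bounded. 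Only the term $G'(\appx_s)(\mu(\appx_\es)-\mu(\appx_s))$ resists, because $\mu$ is discontinuous.

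For that term I would fix a threshold $\varepsilon>0$ (to be optimised) and the event $\set{s}:=\{d(\appx_\es,\hypsurf)\ge\varepsilon\}\cap\{\sup_{u\in[\es,(\es+\delta)\wedge T]}\|\appx_u-\appx_\es\|<\varepsilon\}$. On $\set{s}$ the whole segment $[\appx_\es,\appx_s]$ lies in $\R^d\setminus\hypsurf$, so $\rho(\appx_\es,\appx_s)\le\|\appx_s-\appx_\es\|$, and the intrinsic Lipschitz property of $\mu$ on $\R^d\setminus\hypsurf$ gives $\|r_s\|\le C\|\appx_s-\appx_\es\|$; on $\setc{s}$ one uses the crude bound $\|r_s\|\le C$ from boundedness of $\mu,\sigma,G',G''$. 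Collecting the estimates, taking $\E\sup_{t\le\cdot}$ and applying Doob's maximal inequality and the It\^o $L^2$-isometry gives $\theta_t\le C(\delta+\int_0^t\theta_s\,ds+\int_0^T\P(\setc{s})\,ds)$, whence $\theta_T\le C(\delta+\int_0^T\P(\setc{s})\,ds)$ by Gronwall's lemma. For the probability integral, $\{d(\appx_\es,\hypsurf)<\varepsilon\}\subseteq\{\appx_s\in\hypsurf^{2\varepsilon}\}\cup\{\|\appx_s-\appx_\es\|\ge\varepsilon\}$, and $\int_0^T\P(\appx_s\in\hypsurf^{2\varepsilon})\,ds\le C\varepsilon$ by Theorem~\ref{th:occtime} (valid for $2\varepsilon<\varepsilon_0/2$), while every large-displacement event has probability $\le C\exp(-c\varepsilon^2/\delta)$ on each step by the Gaussian maximal inequality for $\sup_{u\le\delta}\|W_{j\delta+u}-W_{j\delta}\|$ (using $\|\mu\|_\infty\delta\le\varepsilon/2$ for $\delta$ small). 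Hence $\int_0^T\P(\setc{s})\,ds\le C(\varepsilon+\exp(-c\varepsilon^2/\delta))$, and combining with the first paragraph, $\E(\sup_{t\le T}\|X_t-\appx_t\|^2)\le C(\delta+\varepsilon+\exp(-c\varepsilon^2/\delta))$. Choosing $\varepsilon=\delta^{1/2-2\othereps}$ (so, assuming $\othereps<1/4$, $\varepsilon\to0$ and $\exp(-c\varepsilon^2/\delta)=\exp(-c\delta^{-4\othereps})$ is negligible compared with $\delta^{1/2-2\othereps}$) gives $\E(\sup_{t\le T}\|X_t-\appx_t\|^2)\le C\delta^{1/2-2\othereps}$, and a square root yields the rate $\strongrate$.

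I expect the hard part to be precisely this near-hypersurface step. Away from $\hypsurf$ the coefficients are effectively Lipschitz and a routine Gronwall argument closes; but inside $\hypsurf^\varepsilon$ there is no usable bound on $r_s$, so one must pay the occupation time there, and this couples three inputs that must be balanced: the occupation-time estimate of Theorem~\ref{th:occtime}, which has to be invoked for the Euler scheme $\appx$ rather than for the true solution $X$ (so its hypotheses must be checked for $\appx$, e.g.\ on the high-probability event where all one-step increments are small); the one-step large-deviation estimate that keeps the grid point $\appx_\es$ and the interpolation $\appx_s$ on the same side of $\hypsurf$; and the $O(\varepsilon)$ price of the occupation time itself. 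The tension between needing $\varepsilon\gg\sqrt\delta$ for the displacement estimate and wanting $\varepsilon$ as small as possible — the error being linear in $\varepsilon$ — is what caps the rate at $1/4-\othereps$. A secondary technical point is the validity of It\^o's formula for $G(\appx)$ when $G''$ is only piecewise Lipschitz, again handled through Theorem~\ref{th:occtime} for $\appx$.
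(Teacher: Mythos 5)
Your proposal is correct and follows essentially the same route as the paper: transform by $G$, compare $G(\appx)$ with the Euler--Maruyama scheme $\appz$ of the transformed SDE, control the problematic drift term by splitting according to whether the scheme is within $\varepsilon$ of $\hypsurf$ (occupation-time estimate, Theorem~\ref{th:occtime} applied to the It\^o process $\appx$) or the one-step increment exceeds $\varepsilon$ (exponential tail bound as in Lemma~\ref{lem:omega}), then close with Gronwall and balance $\varepsilon\approx\delta^{1/2}$ up to the $\othereps$-loss. The only cosmetic difference is that you center the drift comparison at $\appx_s$, so the piecewise Lipschitz property of $\mu$ itself is invoked on the good event, whereas the paper freezes $\mu$ at $\appx_{\es}$ and instead uses the Lipschitz property of $G'$ and the piecewise Lipschitz property of $G''$; both variants need exactly the same event decomposition and give the same rate.
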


In preparation of the proof of the main result, we prove two lemmas.

\begin{lemma}\label{lem:onestep} 
 Let Assumption \ref{ass:existence}.\ref{ass:existence-musigmabounded} hold. Then there exists a constant $C$ such that for sufficiently small step size $\delta$
\begin{align*}
\E \left(\int_0^{T}\|\appx_s -\appx_\es\| ^2 ds\right)  \le C \delta\,.
\end{align*}
\end{lemma}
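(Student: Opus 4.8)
The plan is to exploit the explicit piecewise form of the Euler--Maruyama interpolation. Fix $j$ and $s\in[j\delta,(j+1)\delta)$, so that $\es=j\delta$; from \eqref{eq:euler} we have
\[
\appx_s-\appx_{\es}=\mu(\appx_{\es})(s-\es)+\sigma(\appx_{\es})(W_s-W_{\es})\,.
\]
Applying $\|a+b\|^2\le 2\|a\|^2+2\|b\|^2$ and Assumption \ref{ass:existence}.\ref{ass:existence-musigmabounded} (boundedness of $\mu$), the drift contribution is bounded pathwise by $2\|\mu\|_\infty^2(s-\es)^2\le 2\|\mu\|_\infty^2\delta^2$.

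For the diffusion contribution I would use that $\appx_{\es}$ is $\F_{\es}$-measurable while $W_s-W_{\es}$ is independent of $\F_{\es}$ with covariance matrix $(s-\es)\,\mathrm{id}$; conditioning on $\F_{\es}$ (equivalently, applying It\^o's $L^2$-isometry to $\int_{\es}^s\sigma(\appx_{\es})\,dW_r$) yields
\[
\E\big(\|\sigma(\appx_{\es})(W_s-W_{\es})\|^2\big)=(s-\es)\,\E\big(\tr(\sigma(\appx_{\es})\sigma(\appx_{\es})^\top)\big)\le d\,\|\sigma\|_\infty^2\,\delta\,,
\]
using boundedness of $\sigma$. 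Combining the two estimates gives $\E\|\appx_s-\appx_{\es}\|^2\le 2\|\mu\|_\infty^2\delta^2+2d\|\sigma\|_\infty^2\delta$ for every $s\in[0,T]$.

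Finally, since the integrand $(s,\omega)\mapsto\|\appx_s(\omega)-\appx_{\es}(\omega)\|^2$ is non-negative and jointly measurable, Tonelli's theorem allows me to interchange expectation and the time integral, so that
\[
\E\left(\int_0^T\|\appx_s-\appx_{\es}\|^2\,ds\right)=\int_0^T\E\|\appx_s-\appx_{\es}\|^2\,ds\le T\big(2\|\mu\|_\infty^2\delta^2+2d\|\sigma\|_\infty^2\delta\big)\,,
\]
which is at most $C\delta$ with $C:=2T\|\mu\|_\infty^2+2dT\|\sigma\|_\infty^2$ as soon as $\delta\le 1$ (this is what ``sufficiently small step size'' refers to). There is no real obstacle here: this is a routine a priori bound on the oscillation of the Euler scheme within one step, and the only minor points to watch are the independence/measurability argument for the stochastic term and the (harmless, by non-negativity) Fubini--Tonelli swap.
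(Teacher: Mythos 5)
Your proposal is correct and follows essentially the same route as the paper: the explicit one-step form of the Euler--Maruyama interpolation, the elementary inequality $\|a+b\|^2\le 2\|a\|^2+2\|b\|^2$, boundedness of $\mu$ and $\sigma$, and the second moment of the Brownian increment. The only cosmetic difference is that you interchange expectation and time integral via Tonelli and bound $\E\|\appx_s-\appx_{\es}\|^2$ pointwise in $s$, whereas the paper sums over the subintervals $[j\delta,(j+1)\delta)$ and takes a supremum over $j$; both yield the same $O(\delta)$ bound.
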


\begin{proof}
By the definition of the Euler-Maruyama method \eqref{eq:euler} we have
\begin{align*}
 \E \left(\int_0^{T}\|\appx_\es -\appx_s\| ^2 ds\right)
 &  = \sum_{j=0}^{T/\delta-1} \E \left(\int_{j\delta}^{(j+1)\delta}\|\appx_{j \delta} -\appx_s\| ^2 ds\right)\\
 &\le  \frac{T}{\delta}\sup_{t\in \{j \delta: j=0,\dots,T/\delta-1\}} \E \left(\int_{t}^{t+\delta}\|\appx_{t} -\appx_s\| ^2 ds\right)\\
 &  =\frac{T}{\delta}\sup_{t\in \{j \delta: j=0,\dots,T/\delta-1\}} \E \left(\int_{t}^{t+\delta}\|\appx_{t} -\appx_t -\mu(\appx_t) \delta - \sigma(\appx_t)(W_s-W_t)\| ^2 ds\right)\\
 &  \le \frac{ 2T}{\delta} \sup_{t\in \{j \delta: j=0,\dots,T/\delta-1\}}\E \left(\int_{t}^{t+\delta}\|\mu(\appx_t) \delta\|^2 ds + \int_t^{t+\delta} \|\sigma(\appx_t) (W_s-W_t)\| ^2 ds\right)\\
 & \le  \frac{2T}{\delta}  \left(\|\mu\|_\infty^2 \delta^3 + \|\sigma\|^2_\infty \sup_{t\in \{j \delta: j=0,\dots,(T-\delta)/\delta\}} \int_t^{t+\delta}\E( \|W_s-W_t\| ^2) ds\right)\\
&=\frac{2T}{\delta}  \left(\|\mu\|_\infty^2 \delta^3 + d\|\sigma\|^2_\infty \sup_{t\in \{j \delta: j=0,\dots,(T-\delta)/\delta\}}\int_t^{t+\delta}(s-t) ds\right)\\
&=  \frac{2T}{\delta}  \left(\|\mu\|_\infty^2 \delta^3 + \frac{d}{2} \|\sigma\|^2_\infty \delta^2\right) \le C \delta\,.
\end{align*}
\end{proof}

For all $\delta,\varepsilon>0$ and all $j=0,\dots,T/\delta-1$, define
\begin{align}\label{eq:omega}
 \set{j}:=\left\{\omega \in \Omega: \sup_{s\in [j\delta,(j+1)\delta]} 
\left\|\appx_s(\omega)-\appx_\es(\omega)\right\| \ge \varepsilon\right\}\,.
\end{align}

\begin{lemma}\label{lem:omega}
Let Assumption \ref{ass:existence}.\ref{ass:existence-musigmabounded} hold. 
Then there exists a constant $C$ such that for all $0<\delta\le 1$, all $\varepsilon>0$, and all $j=0,\dots,T/\delta-1$, it holds that
$\P(\set{j})\le C\exp(-\varepsilon / \|\sigma\|_\infty\delta^{1/2})$.
\end{lemma}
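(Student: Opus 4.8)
The event $\set{j}$ asks that the Euler--Maruyama interpolation moves by at least $\varepsilon$ over a single step of length $\delta$, starting from the frozen value $\appx_\es$. On $[j\delta,(j+1)\delta]$ the process $\appx$ is a genuine Brownian motion with constant (frozen) drift and diffusion,
\[
\appx_s-\appx_{j\delta}=\mu(\appx_{j\delta})(s-j\delta)+\sigma(\appx_{j\delta})(W_s-W_{j\delta})\,,\qquad s\in[j\delta,(j+1)\delta]\,.
\]
So the plan is: first bound the deterministic drift part crudely by $\|\mu\|_\infty\delta$, which for $\delta\le 1$ is a bounded quantity, and absorb it --- for $\varepsilon\le 2\|\mu\|_\infty\delta$ the bound is trivial since $C\exp(-\varepsilon/\|\sigma\|_\infty\delta^{1/2})$ can be made $\ge 1$ by choosing $C$ large enough (note $\varepsilon/\delta^{1/2}\le 2\|\mu\|_\infty\delta^{1/2}\le 2\|\mu\|_\infty$), so it suffices to treat $\varepsilon>2\|\mu\|_\infty\delta$, where we then need $\sup_s\|\sigma(\appx_{j\delta})(W_s-W_{j\delta})\|\ge \varepsilon/2$.

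Second, condition on $\F_{j\delta}$: given $\appx_{j\delta}$, the matrix $M:=\sigma(\appx_{j\delta})$ is deterministic with $\|M\|\le\|\sigma\|_\infty$, and $\|M(W_s-W_{j\delta})\|\le \|\sigma\|_\infty\|W_s-W_{j\delta}\|$. Hence
\[
\P(\set{j}\mid\F_{j\delta})\le \P\Bigl(\sup_{s\in[0,\delta]}\|\widetilde W_s\|\ge \frac{\varepsilon}{2\|\sigma\|_\infty}\Bigr)
\]
for a standard $d$-dimensional Brownian motion $\widetilde W$, a quantity that no longer depends on $\omega$; taking expectations removes the conditioning. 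Then apply a maximal inequality: componentwise, $\sup_{s\le\delta}\|\widetilde W_s\|\ge r$ forces some $\sup_{s\le\delta}|\widetilde W_s^{(i)}|\ge r/\sqrt d$, and by the reflection principle $\P(\sup_{s\le\delta}|\widetilde W_s^{(i)}|\ge \rho)\le 4\P(\widetilde W_\delta^{(i)}\ge\rho)=4\bar\Phi(\rho/\sqrt\delta)$, with the Gaussian tail bound $\bar\Phi(u)\le\exp(-u^2/2)$ for $u\ge 0$. Putting $\rho=\varepsilon/(2\sqrt d\,\|\sigma\|_\infty)$ gives a bound of the form $4d\exp\bigl(-\varepsilon^2/(8d\|\sigma\|_\infty^2\delta)\bigr)$.

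Third, convert the Gaussian (quadratic-exponent) tail into the claimed sub-exponential form $C\exp(-\varepsilon/\|\sigma\|_\infty\delta^{1/2})$. In the regime of interest one may assume $\varepsilon\ge\delta^{1/2}$ (for $\varepsilon<\delta^{1/2}$ the claimed bound again exceeds $1$ for large $C$, since $\varepsilon/\delta^{1/2}<1$), and then $\varepsilon^2/\delta = (\varepsilon/\delta^{1/2})^2\ge \varepsilon/\delta^{1/2}$, so up to adjusting the constant in front and inside (both absorbed into a single $C$ depending on $d$ and $\|\sigma\|_\infty$, not on $\delta,\varepsilon,j$) the Gaussian bound dominates the claimed one. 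Collecting the three regimes ($\varepsilon\le 2\|\mu\|_\infty\delta$; $\varepsilon<\delta^{1/2}$; and the main regime) yields a single constant $C$ that works uniformly.

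\textbf{Main obstacle.} The only genuinely delicate point is the bookkeeping of regimes and the matching of exponents: the natural estimate is Gaussian, $\exp(-c\varepsilon^2/\delta)$, whereas the statement asks for $\exp(-c'\varepsilon/\delta^{1/2})$, which is weaker but cleaner for later use (it is presumably applied with $\varepsilon$ a small power of $\delta$). One must be careful that the ``trivial'' regimes really are covered by a single constant $C$ and that nothing hidden depends on $j$; since $\mu,\sigma$ are bounded and the one-step law is translation-invariant in a Brownian sense, this goes through, but it should be written out explicitly. Everything else --- the reflection principle, the maximal inequality, the Gaussian tail bound --- is standard.
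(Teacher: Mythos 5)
Your approach is sound and leads to the stated bound, but it is genuinely different from the paper's. The paper makes the same pathwise reduction (bound the drift increment by $\|\mu\|_\infty\delta$, bound $\|\sigma(\appx_\es)(W_s-W_\es)\|$ by $\|\sigma\|_\infty\|W_s-W_\es\|$, rescale to $\sup_{0\le s\le 1}\|W_s\|$), but then finishes in one stroke with an exponential Chebyshev/Doob argument: applying Doob's submartingale inequality to the submartingale $\exp(\|W_s\|)$ gives $\P(\sup_{0\le s\le 1}\|W_s\|\ge x)\le \E(\exp(\|W_1\|))e^{-x}$ with $x=(\varepsilon-\|\mu\|_\infty\delta)/(\|\sigma\|_\infty\delta^{1/2})$, and the factor $\exp(\|\mu\|_\infty\delta^{1/2}/\|\sigma\|_\infty)\le\exp(\|\mu\|_\infty/\|\sigma\|_\infty)$ is absorbed into $C$ using $\delta\le1$. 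This yields exactly the exponent $\varepsilon/(\|\sigma\|_\infty\delta^{1/2})$ for all $\varepsilon>0$ with no case distinctions (the Markov step is valid even when $\varepsilon<\|\mu\|_\infty\delta$). Your route via the reflection principle produces the stronger Gaussian intermediate bound $\exp(-c\,\varepsilon^2/\delta)$, at the cost of the regime bookkeeping needed to downgrade it.

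That downgrading step is the one place where your write-up is not yet correct as stated: the claim that the mismatch between your exponent $\varepsilon^2/(8d\|\sigma\|_\infty^2\delta)$ and the target $\varepsilon/(\|\sigma\|_\infty\delta^{1/2})$ can be ``absorbed into a single $C$'' is false if read literally, because a deficit in the constant \emph{inside} the exponent can never be compensated by a prefactor uniformly in $u=\varepsilon/\delta^{1/2}$: $\exp(-c_1u)/\exp(-c_2u)\to\infty$ whenever $c_1<c_2$. The fix is already contained in your own device of trivial regimes, but the threshold must be chosen correctly: declare the regime $\varepsilon<8d\|\sigma\|_\infty\delta^{1/2}$ trivial as well (there the target bound is at least $C\exp(-8d)\ge1$ for $C$ large enough), and observe that in the complementary regime $\varepsilon\ge 8d\|\sigma\|_\infty\delta^{1/2}$ one has $\varepsilon^2/(8d\|\sigma\|_\infty^2\delta)\ge\varepsilon/(\|\sigma\|_\infty\delta^{1/2})$, so the Gaussian bound dominates the claimed one with no adjustment at all. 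With the threshold corrected in this way (and noting that nothing in the argument depends on $j$, since only $\|\mu\|_\infty$ and $\|\sigma\|_\infty$ enter), your proof is complete.
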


\begin{proof}
\begin{align*}
\P&\left(\sup_{j\delta\le s\le (j+1)\delta}\|\appx_{s}-\appx_\es\|\ge\varepsilon \right)
=\P\left(\sup_{j\delta\le s\le (j+1)\delta}\|\mu(\appx_\es)(s-\es)+\sigma(\appx_\es)(W_s-W_\es)\|\ge\varepsilon \right)\\
&\le\P\left(\sup_{j\delta\le s\le (j+1)\delta}\Big\{\|\mu\|_\infty \delta + \|\sigma\|_\infty\|W_s-W_\es\|\Big\}\ge\varepsilon \right)
=\P\left(\sup_{j\delta\le s\le (j+1)\delta}\|W_s-W_\es\|\ge\frac{\varepsilon-\|\mu\|_\infty \delta}{\|\sigma\|_\infty} \right)\\
&=\P\left(\sup_{0\le s\le 1}\|W_s-W_0\|\ge\frac{\varepsilon-\|\mu\|_\infty \delta}{\|\sigma\|_\infty\delta^{1/2}} \right)
=\P\left(\exp\left(\sup_{0\le s\le 1}\|W_s\|\right)\ge\exp\left(\frac{\varepsilon-\|\mu\|_\infty \delta}{\|\sigma\|_\infty\delta^{1/2}}\right)\right)\\
&\le \E\left(\exp(\|W_1\|)\right)\exp\left(\frac{\|\mu\|_\infty \delta-\varepsilon}{\|\sigma\|_\infty\delta^{1/2}} \right)
\le C\exp\left(-\frac{\varepsilon}{\|\sigma\|_\infty \delta^{1/2}} \right)\,,
\end{align*}
where we applied Doob's submartingal inequality, and in the last step used that $\delta \le 1$ .
\end{proof}

Now, we are ready to prove our main result.

\begin{proof}[Proof of Theorem \ref{thm:conv}] 
Since $G^{-1}$ is Lipschitz by the proof of \cite[Theorem 3.20]{sz2016b},
\begin{align}\label{eq:est-lip}
\E \Big(\sup_{0\le t\le T} \|X_t-\appx_t \|^2 \Big)^{1/2}
\le L_{G^{-1}} \E \Big(\sup_{0\le t\le T} \|Z_t-G (\appx_t ) \|^2 \Big)^{1/2}\,,
\end{align}
with $Z=G(X)$ as in \eqref{eq:SDEtransf}.
Let $\appz$ be the Euler-Maruyama approximation of $Z$.
It holds that
\begin{align}\label{eq:est-dreieck}
\E \Big(\sup_{0\le t\le T} \|Z_t-G (\appx_t ) \|^2 \Big)^{1/2}
\le \E \Big(\sup_{0\le t\le T} \|Z_t- \appz_t \|^2 \Big)^{1/2}
+\E \Big(\sup_{0\le t\le T} \|\appz_t-G (\appx_t ) \|^2 \Big)^{1/2}\,.
\end{align}
For estimating the first term in \eqref{eq:est-dreieck}, recall that
by \cite[Theorem 3.20]{sz2016b}, the transformed SDE \eqref{eq:SDEtransf} has Lipschitz coefficients.
Since the Euler-Maruyama method converges with strong order $1/2$ for SDEs with Lipschitz coefficients
(see \cite[Theorem 10.2.2]{kloeden1992}), there exists a constant $C_1>0$ such that for sufficiently small $\delta>0$,
\begin{align}\label{eq:est-euler}
\E\Big(\sup_{0\le t\le T}\|Z_t- \appz_t\|^2\Big)\le C_1 \delta\,.
\end{align}

We now turn to the second term in \eqref{eq:est-dreieck}, i.e.~we estimate the difference between $G$ applied to the Euler-Maruyama approximation of $X$ and the Euler-Maruyama approximation of $Z$.
Denote, for all $\tau\in[0,T]$,
\begin{align*}
u(\tau):=\E\left(\sup_{0\le t\le \tau}
\|G(\appx_{t})-\appz_{t}\|^2\right)\,.
\end{align*}
With $\nu(x_1,x_2)=G'(x_1)\mu(x_2)+\frac{1}{2}\tr (\sigma(x_2)^\top G''(x_1)\sigma(x_2))$
we have by It\^o's formula,
\begin{align*}
G(\appx_{t})=G(\appx_{0})+\int_0^{t}\nu(\appx_s,\appx_{\es}) ds+\int_0^{t}G'(\appx_s)\sigma(\appx_{\es})dW_s\,,
\end{align*}
so that
\begin{align*}
u(\tau)
&=\E \left(\sup_{0\le t\le \tau}\left\|\int_0^{t} \nu(\appx_s,\appx_{\es})ds
+\int_0^{t}G'(\appx_s)\sigma(\appx_{\es})dW_s
-\int_0^{t}\tilde \mu\left(\appz_{\es}\right) ds-\int_0^{t}\tilde \sigma\left(\appz_{\es}\right) dW_s\right\|^2\right)\\
&\le \E \left(\sup_{0\le t\le \tau}\left(4\left\|\int_0^{t}\left(\nu(\appx_s,\appx_{\es}) 
-\nu(\appx_{\es},\appx_{\es})\right) ds\right\|^2
+4\left\|\int_0^{t}\left(G'(\appx_s)\sigma(\appx_{\es})-G'(\appx_{\es})\sigma(\appx_{\es})\right)dW_s\right\|^2  \right.\right.
\\ 
&\left.\left.\quad+4\left\|\int_0^{t}\left(\tilde\mu(G(\appx_{\es}))-\tilde \mu(\appz_{\es})\right) ds\right\|^2
+4\left\|\int_0^{t}\left(\tilde\sigma(G(\appx_{\es}))-\tilde \sigma(\appz_{\es})\right)dW_s\right\|^2 \right) \right)\,.
\end{align*}

Applying the Cauchy-Schwarz inequality to the Lebesgue integrals and the $d$-dimensional Burkholder-Davis-Gundy inequality \cite[Lemma 3.7]{hutzenthaler2012} to the It\^o integrals, we obtain
\begin{align}
u(\tau)
&\le 4 T \,\E \left(\int_0^{\tau}\left\|\nu(\appx_s,\appx_{\es}) 
-\nu(\appx_{\es},\appx_{\es})\right\|^2 ds\right)
+8d\,\E\left(\int_0^{\tau}\left\|G'(\appx_s)\sigma(\appx_{\es})-G'(\appx_{\es})\sigma(\appx_{\es})\right\|^2 ds\right) \nonumber
\\
&\quad+4T\,\E\left(\int_0^{\tau}\left\|\tilde\mu(G(\appx_{\es}))-\tilde \mu(\appz_{\es}) \right\|^2 ds\right)
+8d\,\E\left(\int_0^{\tau}\left\|\tilde\sigma(G(\appx_{\es}))-\tilde \sigma(\appz_{\es})\right\|^2 ds\right) \nonumber\\
&=:4T\,E_1+8d\,E_2+4T\,E_3+8d\,E_4\,.\label{eq:g-z}
\end{align}

For estimating $E_1$ in \eqref{eq:g-z}, we will use that
\begin{align*}
\left\|\nu(x_1,x_2)-\nu(x_2,x_2)\right\|^2\le
\begin{cases}
\left(2L_{G'}^2 \|\mu\|_\infty^2  +\frac{1}{2}L_{G''}^2\|\sigma\|_\infty^4\right)\|x_1-x_2\|^2&  x_1\notin \hypsurf^\varepsilon\,,\|x_1-x_2\|<\varepsilon\\
4 \|\mu\|_\infty^2 \|G'\|^2_\infty + \|\sigma\|_\infty^4 \|G''\|_\infty^2 & \text{otherwise}\,.
\end{cases}
\end{align*}

With this and the definition of $\setc{j}$ in \eqref{eq:omega}, we get
\begin{align*}
E_1&=\int_0^{\tau} \E\left(\left\|\nu(\appx_s,\appx_\es)-\nu(\appx_\es,\appx_\es)\right\|^2\left(\1_{\{\appx_s \notin \hypsurf^\varepsilon \}}\1_\setc{\es/\delta}+\1_{\{\appx_s \notin \hypsurf^\varepsilon \}}\1_\set{\es/\delta}+\1_{\{\appx_s \in \hypsurf^\varepsilon \}}\right)\right)ds\\
&\le \left(2 L_{G'}^2  \|\mu\|_\infty^2+\frac{1}{2}L_{G''}^2\|\sigma\|_\infty^4\right)\varepsilon^2 T\\
& \quad + \left(4 \|\mu\|_\infty^2 \|G'\|^2_\infty + \|\sigma\|^4 \|G''\|_\infty^2 \right)
\left(\int_0^{T} \P(\set{\es/\delta}) ds+ \int_0^T\P( \{\appx_s \in \hypsurf^\varepsilon \})ds\right)\,.
\end{align*}

By Lemma \ref{lem:omega}, $\int_0^T \P(\set{\es/\delta}) ds\le C_2\exp(- \varepsilon /\|\sigma\|_\infty\delta^{1/2} )$,
and by Theorem \ref{th:occtime}, $\int_0^T \P( \{\appx_s \in \hypsurf^\varepsilon \})ds \le C_3 \varepsilon$, for suitable constants $C_2,C_3$.
In order to minimize the bound on $E_1$, we choose $\varepsilon$ such that $\exp(- \varepsilon /\|\sigma\|_\infty\delta^{1/2} )+\varepsilon$ is minimized for $\delta$ sufficiently small, yielding $\varepsilon=-\|\sigma\|_\infty \delta^{1/2} \log(\|\sigma\|_\infty \delta^{1/2} )=\|\sigma\|_\infty \delta^{1/2-2 \othereps} (-\delta^{2 \othereps} \log(\|\sigma\|_\infty \delta^{1/2} ))$ for arbitrarily small $ \othereps>0$. Hence, with
$C_4=(2 L_{G'}^2  \|\mu\|_\infty^2+\frac{1}{2}L_{G''}^2\|\sigma\|_\infty^4)T$,
$C_5=(4 \|\mu\|_\infty^2 \|G'\|^2_\infty + \|\sigma\|^4 \|G''\|_\infty^2) C_2$,
$C_6=(4 \|\mu\|_\infty^2 \|G'\|^2_\infty + \|\sigma\|^4 \|G''\|_\infty^2) C_3$,
we get
\begin{align*}
E_1&\le  C_4 \varepsilon^2+C_5 \exp\left(- \frac{\varepsilon}{\|\sigma\|_\infty\delta^{1/2}}\right)
+ C_6 \varepsilon\\
 &=  C_4 \|\sigma\|_\infty^2 \delta^{1-4\othereps} (-\delta^{2 \othereps}\log(\|\sigma\|_\infty \delta^{1/2} ))^2+C_5 \|\sigma\|_\infty \delta^{1/2}+C_6 \|\sigma\|_\infty \delta^{1/2-2 \othereps} (-\delta^{2 \othereps}\log(\|\sigma\|_\infty \delta^{1/2} ))\,.
\end{align*}
Thus, with $C_7=C_4  \|\sigma\|_\infty^2 +C_5 \|\sigma\|_\infty+C_6 \|\sigma\|_\infty$ and for arbitrarily small fixed $\othereps>0$, it holds that for sufficiently small $\delta$
\begin{align}\label{eq:e1}
E_1\le C_7 \delta^{1/2-2\othereps}\,.
\end{align}

For estimating $E_2$ in \eqref{eq:g-z}, we apply Lemma \ref{lem:onestep} to get
\begin{align}\label{eq:e2}
E_2\le L_{G'}^2\|\sigma\|_\infty^2 \int_0^{T}  \E\left(\left\|\appx_s-\appx_{\es}\right\|^2
\right)ds
\le  L_{G'}^2\|\sigma\|_\infty^2  C_8 \delta\,.
\end{align}

For estimating $E_3,E_4$ in \eqref{eq:g-z},
we use that $\tilde \mu, \tilde \sigma$ are Lipschitz by \cite[Theorem 3.20]{sz2016b}, to get
\begin{align}\label{eq:e3}
E_3&\le L_{\tilde\mu}^2\int_0^{\tau}\E\left(\left\|G(\appx_{\es})-\appz_{\es}\right\|^2 \right)ds\le L_{\tilde\mu}^2 \int_0^{\tau}u(s)ds\,,
\\ \label{eq:e4}
E_4& \le L_{\tilde\sigma}^2\int_0^{\tau}\E\left(\left\|G(\appx_{\es})-\appz_{\es}\right\|^2\right) ds\le  L_{\tilde\sigma}^2\int_0^{\tau}u(s)ds\,.
\end{align}

Combining the estimates \eqref{eq:e1},\eqref{eq:e2},\eqref{eq:e3},\eqref{eq:e4} with \eqref{eq:g-z}, we get
\begin{align*}
0\le u(\tau) \le C_9 \int_0^\tau u(s) ds + 4 T C_7 \delta^{1/2-2\othereps}+ 8d L_{G'}^2\|\sigma\|_\infty^2 C_8 \delta\,,
\end{align*}
with
$C_9= 4T L_{\tilde\mu}^2+8dL_{\tilde\sigma}^2$. Using that $4 T C_7 \delta^{1/2-2\othereps}+ 8d L_{G'}^2\|\sigma\|_\infty^2 C_8 \delta\le C_{10}  \delta^{1/2-2\othereps}$ for $\delta\le 1$, and
applying Gronwall's inequality yields for all $\tau \in [0,T]$,
\begin{align}\label{eq:est-gron}
u(\tau)\le C_{10} \exp(C_9 \tau) \delta^{1/2-2\othereps}\,.
\end{align}

Combining \eqref{eq:est-euler} and \eqref{eq:est-gron} with \eqref{eq:est-dreieck}, and the result with \eqref{eq:est-lip}, finally yields
\begin{align*}
\E \left(\sup_{0\le t\le T} \|X_t-\appx_t \|^2 \right)^{1/2}
\le L_{G^{-1}}\Big(C_1 \delta\Big)^{1/2} +L_{G^{-1}}\Big(C_{10}\exp(C_9 T) \delta^{1/2-2\othereps}\Big)^{1/2}
\le C \delta^{1/4-\othereps}\,,
\end{align*}
for a suitably chosen constant $C$, for arbitrarily small $\othereps>0$, and
 for $\delta$ sufficiently small.
\end{proof}

\section{Examples}
\label{sec:examples}

We ran simulations for several examples -- ones of theoretical interest as well as an example coming from applications.

When studying stochastic dynamical systems which include a noisy signal, then filtering this signal leads to a higher dimensional system with a degenerate diffusion coefficient.
Stochastic control problems often lead to an optimal control policy which makes the drift of the system discontinuous.
Examples are models with incomplete market information in mathematical finance where the rate with which cashflows are paid from a firm value process change systematically when the asset-liability ratio passes a certain threshold which then triggers a rating change.

The class of equations studied here appears frequently in several areas of applied mathematics and the natural sciences.

\paragraph{Step-function}
In the first example the drift is the step function
$\mu(x_1,x_2)=(3(\1_{\{x_1 \ge 0\}}-\1_{\{x_1 < 0\}}),1)^\top$, and $\sigma\equiv\idz$.
It can easily be checked that these coefficients satisfy Assumption \ref{ass:existence}.
In particular, note that the non-parallelity condition is trivially satisfied, since $\sigma$ is uniformly non-degenerate. Since $\mu$ does not satisfy a one-sided Lipschitz condition, our result is the first one that gives a strong convergence rate of the Euler-Maruyama method for this example.

\paragraph{Discontinuity along the unit circle}
In this example the drift has a discontinuity along the unit circle,
and the diffusion coefficient is degenerate on the whole of $\R^2$:
\begin{align*}
 \mu(x_1,x_2)=\begin{cases}
(1,1)^\top & x_1^2+x_2^2\ge 1\\
(-x_1,x_2)^\top & x_1^2+x_2^2<1\,,
          \end{cases}
          \qquad
     \sigma(x_1,x_2)=\frac{2}{1+x_1^2+x_2^2}  \left(
  \begin{array}{cc}
   x_1 & 0  \\
   x_2 & 0
    \end{array} \right)\,.
\end{align*}
 Assumption \ref{ass:existence}  largely follows from Example \ref{ex:compact}.  
The non-parallelity condition is readily verified:
\begin{align*} \left\|
 \frac{2}{(1+x_1^2+x_2^2)(x_1^2+x_2^2)}  \left(
  \begin{array}{cc}
   x_1 & x_2  \\
   0 & 0
    \end{array} \right) 
  \left(
  \begin{array}{c}
   x_1  \\
   x_2
    \end{array} \right) 
    \right\|
    =\frac{2\sqrt{x_1^2+x_2^2}}{(1+x_1^2+x_2^2)(x_1^2+x_2^2)} 
    =1
\end{align*}
for all points $(x_1,x_2)$ that lie on the unit circle, i.e.~$x_1^2+x_2^2=1$.

\paragraph{Dividend maximization under incomplete information}
In insurance mathematics, a well-studied problem is the maximization of the expected discounted future dividend payments until the time of ruin of an insurance company, a value which serves as a risk measure.
In \cite{sz16} the problem is studied in a setup that allows for incomplete information about the market.
This leads to a joint filtering and stochastic optimal control problem, and after solving the filtering problem,
the driving dynamics are high dimensional and have a degenerate diffusion coefficient.
This issue is described in more detail in \cite{sz16}.
Solving the stochastic optimal control problem in dimensions higher than three with the usual technique (solving an associated partial differential equation) becomes practically infeasible. Therefore, one has to resort to simulation.
The SDE that has to be simulated has the coefficients

\begin{align*}
\mu(x_1,\dots,x_d)&=
\begin{pmatrix}\divmu_d+\sum_{i=1}^{d-1}(\divmu_i-\divmu_d)x_{i+1}-\bar u\1_{[\divthr(x_2,\dots,x_d),\infty)}(x_1)\\
q_{d1} + \sum_{j=1}^{d-1} (q_{j1}-q_{d1}) x_{j+1}\\
\vdots\\
q_{d(d-1)} + \sum_{j=1}^{d-1} (q_{j(d-1)}-q_{d(d-1)}) x_{j+1}
\end{pmatrix}\,,\\[0.4em]
 \sigma(x_1,\dots,x_d)&=\left(
  \begin{array}{cccc}
   \divsig & 0 & \dots & 0\\
   x_2 \frac{\divmu_1-\divmu_d-\sum_{j=1}^{d-1}(\divmu_j-\divmu_d) x_{j+1}}{\divsig} & \vdots &  & \vdots\\
   \vdots & \vdots &  & \vdots\\
   x_d \frac{\divmu_{d-1}-\divmu_d-\sum_{j=1}^{d-1}(\divmu_j-\divmu_d) x_{j+1}}{\divsig} & 0 & \dots & 0
  \end{array}
 \right)\,,
 \end{align*}
 
where  $\bar u,\divsig,(\divmu_i)_{i=1}^d,(q_{ij})_{i,j=1}^d$ are known constants.
The arguments $x_2,\dots,x_d$ are elements of the simplex $\{(x_2,\dots,x_d)\in[0,1]^{d-1}:\sum_{j=1}^{d-1}x_{j+1}\le1\}$,
and the corresponding processes stay within this simplex almost surely, see \cite{sz16}.
The function $\divthr$ determines the hypersurface $\hypsurf$ along which the drift is discontinuous: $\hypsurf=\{(x_1,\dots,x_d): x_1=\divthr(x_2,\dots,x_d)\}$.
In our simulations we choose $d=5$ and $\divthr$ affine linear, but note that we need not restrict ourselves
to affine linear $\divthr$.\\

We need to check Assumption \ref{ass:existence}: Since $x_2,\dots,x_d \in [0,1]$, $\mu, \sigma$ are bounded,
and all first order derivatives of the entries of $\sigma$ are bounded. Hence, $\sigma$ is Lipschitz.
$\mu$ is piecewise Lipschitz, and since $\divthr$ is affine linear, $\hypsurf \in C^4$.
Whether the non-parallelity condition holds depends on the choice of the parameters,
but for ours the condition is satisfied.
Assumption \ref{ass:existence}.\ref{ass:existence-alpha} can easily be checked.
Note that the coefficients can be extended to the whole of $\R^d$ in a way that they still satisfy our assumptions.

\paragraph{Error estimate}
The $L^2$-error is estimated by
\begin{align*}
\err_k :=  \bar e \,\hat E\left(\left\|X_T^{(k)} - X_T^{(k-1)}\right\|^2\right)^{1/2}\,,
\end{align*}
where $X_T^{(k)}$ is the numerical approximation of $X_T$ with step size
$\delta^{(k)}$, $\hat E$ is an estimator of the mean value using $2^{14}$
paths, and $\bar e$ is a normalizing constant so that $\err_1=\sqrt{1/4}$.

Figure \ref{fig:err} shows $\log_2$ of the estimated $L^2$-error of the Euler-Maruyama approximation of $X_T$ plotted over $\log_2 \delta ^{(k)}$ for the examples presented above.
We observe that the theoretical convergence rate is approximately obtained for the example of a step-function and that the other examples converge at a faster rate.
In particular, for the examples with degenerate diffusion coefficient, the convergence rate is not worse than for the other example.
Even for the step-function example, for sufficiently small step-size the convergence rate seems to be higher than the theoretical one.
Hence, it will be an interesting topic for future research to prove sharpness, or find a sharp bound.\\

\begin{figure}[ht]
\begin{center}
\includegraphics[scale=0.6]{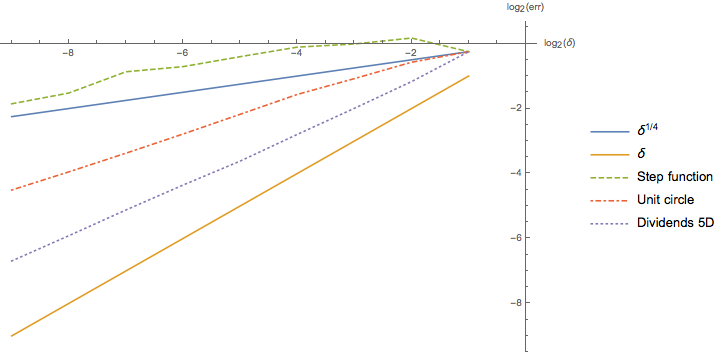}
\caption{Estimated $L^2$-errors.}\label{fig:err}
\end{center}
\end{figure}

Even though the proven rate for the Euler-Maruyama method is lower than for the transformation-based method from \cite{sz2016b}, the calculations are usually faster in practice using the first method, since the simulation of a single path is faster.
Table \ref{tab:runtimes} confirms this claim:
we observe that computation times are higher by up to two orders of magnitude for the transformation method, while the estimated error is of comparable size.

\begin{table}[ht]
\begin{center}
  \begin{tabular}{ | c | c c | c c | }
       \hline
      &
      \multicolumn{2}{c|}{computation time} &
      \multicolumn{2}{c|}{estimated error} \\ \cline{2-5}
     & EM & GM  & EM & GM \\
    \hline
   Step function & 10.84 & 86.92 & 0.1324 & 0.3362 \\ \hline
    Unit circle & 14.39 & 5267.37 & 0.0195 & 0.0323 \\ \hline
    Dividends 5D & 45.52 & 7398.97 & 0.0026 & 0.0032 \\
    \hline
    \end{tabular}
  \caption{Runtimes in seconds using sequential computation and estimated errors for the Euler-Maruyama method (EM) and the transformation method (GM) with $512$ time-steps and $1024$ paths.\label{tab:runtimes}}
  \end{center}
\end{table}

For completeness, we remark that one can construct examples, where the transformation method is much faster
while giving a smaller error.
For example, start with prescribing the transform $G(x)=x+x|x|\phi(10x)$ and set $\mu(x)=\frac{1}{2}(G^{-1})''(G(x))$ and $\sigma(x)=(G^{-1})'(G(x))$. This leads to $\tilde \mu (z)=0$ and $\tilde \sigma(z)=1$. Hence, if we use the transformation method with the same $G$, then $\appz=Z=W$ and the transformation method gives the estimate $G^{-1}(W)$, which is the exact solution.

\paragraph{Conclusion}
In this paper we have for the first time proven strong convergence and also a positive strong convergence rate for an explicit method (the Euler-Maruyama method) for multidimensional SDEs with discontinuous drift that has a degenerate diffusion coefficient, or with a discontinuous drift that does not satisfy a one-sided Lipschitz condition, or both.
The Euler-Maruyama method has the advantage that it does not need the exact form of the set of discontinuities of the drift as an input, and that in practice, computation of one path is fast in comparison to the second method in the literature that can deal with this class of SDEs.
Our numerical experiments suggest that in addition to these advantages, it even seems that the Euler-Maruyama method converges at a higher than the theoretically obtained rate for many examples and 
it will be a topic of future research to prove sharpness, or find a sharp bound.


\section*{Acknowledgements}
The authors thank Andreas Neuenkirch and Lukasz Szpruch for fruitful discussions that helped to improve the estimate in Lemma \ref{lem:omega} and hence the obtained convergence rate and Thomas M\"uller-Gronbach for pointing out an inaccuracy in the definition of $\alpha$.

G.~Leobacher is supported by the Austrian Science Fund (FWF): Project F5508-N26, which is part of the Special Research Program ``Quasi-Monte Carlo Methods: Theory and Applications''.
A part of this paper was written while G.~Leobacher was member of the Department of Financial Mathematics and Applied Number Theory, Johannes Kepler University Linz, 4040 Linz, Austria.

M.~Sz\"olgyenyi is supported by the Vienna Science and Technology Fund (WWTF): Project MA14-031.
Furthermore, M.~Sz\"olgyenyi is grateful for the AXA Research Grant “Numerical Methods for Stochastic Differential Equations with Irregular Coefficients with Applications in Risk Theory and Mathematical Finance”.



\vspace{2em}
\centerline{\underline{\hspace*{16cm}}}

 \noindent G. Leobacher \\
 Institute for Mathematics and Scientific Computing, University of Graz, Heinrichstra\ss{}e 36, 8010 Graz, Austria\\
 gunther.leobacher@uni-graz.at\\

\noindent M. Sz\"olgyenyi \Letter \\
Institute of Statistics and Mathematics, Vienna University of Economics and Business, Welthandelsplatz 1, 1020 Vienna, Austria\\
michaela.szoelgyenyi@wu.ac.at


\end{document}